\newtheorem{theorem}{Theorem}[section]
\newtheorem{lemma}[theorem]{Lemma}
\newtheorem{proposition}[theorem]{Proposition}
\theoremstyle{definition}
\newtheorem{definition}[theorem]{Definition}
\newtheorem{remark}[theorem]{Remark}
\newtheorem{example}[theorem]{Example}
\newcommand{\ReDeclareMathOperator}[2]{\let#1\relax\DeclareMathOperator{#1}{#2}}
\ReDeclareMathOperator{\int}{int}
\ReDeclareMathOperator{\K}{K}
\ReDeclareMathOperator{\CS}{CSat}
\ReDeclareMathOperator{\KS}{KS}
\ReDeclareMathOperator{\LO}{CLC}
\ReDeclareMathOperator{\CSAT}{CSAT}
\ReDeclareMathOperator{\SFilt}{SFilt}
\ReDeclareMathOperator{\WLC}{WLC}
\ReDeclareMathOperator{\WLO}{WCLC}
\ReDeclareMathOperator{\GC}{AC}
\ReDeclareMathOperator{\P}{\mathcal{P}}
\newcommand{\categoryname}[1]{\ensuremath{\mathbf{#1}}\xspace}
\newcommand{\DeclareCategory}[2]{\newcommand{#1}{\categoryname{#2}}}
\DeclareCategory{\SMT}{SMT}
\DeclareCategory{\wMT}{wMT}
\DeclareCategory{\RMT}{MT_R}
\DeclareCategory{\PMT}{MT_P}
\DeclareCategory{\SRMT}{SRMT}
\DeclareCategory{\RMTz}{RMT_0}
\DeclareCategory{\SRMTz}{SRMT_0}
\DeclareCategory{\KMT}{KMT}
\DeclareCategory{\MT}{MT}
\DeclareCategory{\Frm}{Frm}
\DeclareCategory{\Raney}{RE}
\DeclareCategory{\pRaney}{pRaney}
\DeclareCategory{\BFrm}{BoolFrm}
\DeclareCategory{\CFrm}{CFrm}
\DeclareCategory{\SFrm}{SFrm}
\DeclareCategory{\SHFrm}{SHFrm}
\DeclareCategory{\NFrm}{NormFrm}
\DeclareCategory{\SNFrm}{SNormFrm}
\DeclareCategory{\HFrm}{HFrm}
\DeclareCategory{\RegFrm}{RegFrm}
\DeclareCategory{\SRegFrm}{SRegFrm}
\DeclareCategory{\CRegFrm}{CRegFrm}
\DeclareCategory{\SCRegFrm}{SCRegFrm}
\DeclareCategory{\KRegFrm}{KRegFrm}
\DeclareCategory{\Top}{Top}
\DeclareCategory{\Topz}{Top_0}
\DeclareCategory{\Sob}{Sob}
\DeclareCategory{\IA}{IA}
\DeclareCategory{\Reg}{Reg}
\DeclareCategory{\Set}{Set}
\DeclareCategory{\CABA}{CABA}
\DeclareCategory{\HA}{HA}
\DeclareCategory{\cHA}{cHA}
\DeclareCategory{\MTP}{MT_P}
\DeclareCategory{\TMTR}{T0 MT_R}
\DeclareCategory{\MTz}{MT_0}
\DeclareCategory{\TzMT}{T0MT}
\DeclareCategory{\Topwlc}{Top_{WLC}}
\DeclareCategory{\SzMT}{ST0MT}
\DeclareCategory{\TDMTP}{TD MT_P}
\renewcommand{\diamond}{\lozenge}
\let\ampersand\&
\renewcommand{\&}{\mathbin{\ampersand}}
\newcommand{\upset}{\mathord{\uparrow}}
\newcommand{\ca}[1]{\mathcal{#1}}
\newcommand{\bd}[1]{\mathbf{#1}}
\renewcommand{\O}{\mathsf O}
\newcommand{\C}{\mathsf C}
\renewcommand{\S}{\mathsf{S}}
\newcommand{\LC}{\mathsf{LC}}
\newcommand{\F}{\mathscr F}
\newcommand{\B}{\mathscr B}
\newcommand{\R}{\mathsf{R}}
\newcommand{\ve}{\vee}
\newcommand{\bwe}{\bigwedge}
\newcommand{\se}{\subseteq}
\newcommand{\up}{{\uparrow}}
\newcommand{\Fi}{\mathsf{Filt}}
\newcommand{\fe}{\Fi_{\mathcal{E}}}
\newcommand{\fse}{\Fi_{\mathcal{SE}}}
\setlist[enumerate]{font=\normalfont}
\tikzset{
	symbol/.style={
		draw=none,
		every to/.append style={
			edge node={node [sloped, allow upside down, auto=false]{$#1$}}}
	}
}
\patchcmd{\@setaddresses}{\indent}{\noindent}{}{}
\patchcmd{\@setaddresses}{\indent}{\noindent}{}{}
\patchcmd{\@setaddresses}{\indent}{\noindent}{}{}
\patchcmd{\@setaddresses}{\indent}{\noindent}{}{}
\title{McKinsey-Tarski algebras and Raney extensions}
\author{G. Bezhanishvili, R. Raviprakash, A. L. Suarez, J. Walters-Wayland}
\thanks{Corresponding author: A. L. Suarez}
\address{New Mexico State University, Las Cruces, NM, USA}
\email{guram@nmsu.edu}
\address{University of KwaZulu-Natal, South Africa}
\email{raviprakashr@ukzn.ac.za}
\address{University of the Western Cape, South Africa}
\email{annalaurasuarez993@gmail.com}
\address{CECAT, Chapman University, Orange, CA, USA}
\email{jwalterswayland@gmail.com}
\subjclass[2020]{18F60; 18F70; 06D22; 06E25; 54D10; 54D15.}
\keywords{Interior algebras, frames, topologies, duality theory, separation axioms}
\begin{document}
	
	\subjclass[2020]{18F70; 06D22; 06E25; 06B23; 54D10; 18A40} 
	\keywords{Pointfree topology; interior algebra; Raney lattice; $T_D$-separation; $T_0$-separation}

	\begin{abstract}
		We introduce the notion of Raney morphism between MT-algebras and show that the resulting category is equivalent to the category of Raney extensions. This is done by generalizing the construction of the Funayama envelope of a frame. The resulting notion of the $T_0$-hull of a Raney extension generalizes that of the $T_D$-hull of a frame. 
	\end{abstract}
	
	\maketitle
	
	\tableofcontents
	\section{Introduction}
	
	The standard approach to pointfree topology is through the formalism of frames or locales \cite{Joh1982,PicadoPultr2012}. But recently more expressive pointfree approaches to space have been developed: the formalism of MT-algebras (McKinsey-Tarski algebras) \cite{BezhanishviliRR2023} and that of Raney 
	extensions 
	\cite{Sua24,suarez25}. As the names suggest, the MT-approach goes back to the work of McKinsey and Tarski \cite{MT1944} and the Raney approach to that of Raney \cite{Ran52}. MT-algebras are complete boolean algebras $B$ equipped with an interior operator $\square$, and can also be thought of as pairs $(B,L)$ such that $B$ is a complete boolean algebra and $L$ is a subframe of $B$ (see \cref{Frames and MT-algebras}). On the other hand, Raney extensions are pairs $(C,L)$, where $C$ is a coframe and $L$ is a subframe of $C$ that meet-generates $C$ and joins in $L$ distribute over binary meets in $C$ (see \cref{sec: MT and RE}). 
	
	There is a close connection between MT-algebras and frames. Indeed, for each MT-algebra $M$, its open elements form a frame $L$ and, up to isomorphism, each frame arises this way. This can be seen by taking the Funayama envelope $\F L$ of $L$ (see \cref{Frames and MT-algebras}). The MT-algebras of the form $\F L$ were characterized in \cite{BezhanishviliRR2023} as those MT-algebras that satisfy the $T_D$-separation axiom. But care is needed with morphisms since not each frame morphism lifts to an MT-morphism between their Funayama envelopes. This was remedied in \cite{bezhanishvili2025funayamaenvelopetdhullframe} where the notion of proximity morphism between MT-algebras was introduced and it was shown that the above one-to-one correspondence lifts to a categorical equivalence. Thus, frames can be thought of as the MT-algebras satisfying the $T_D$-separation, and each frame $L$ has its $T_D$-hull $\F L$.
	
	There is also a close connection between Raney extensions and frames. Indeed, the assignment $(C,L) \mapsto L$ defines a functor from the category $\Raney$ of Raney extensions to the category $\Frm$ of frames, and this functor has a left adjoint \cite{suarez25}. Thus, $\Frm$ can be thought of as a coreflective subcategory of $\Raney$.
	
	It is only natural to compare the two formalisms of MT-algebras and Raney extensions. This was done recently in \cite{GSRA25}, where it was shown that each MT-algebra $M$ gives rise to a Raney extension $\R M := (\S M, \O M)$, where $\S M$ is the coframe of saturated elements and $\O M$ the frame of open elements of $M$. Moreover, up to isomorphism, every Raney extension $R = (C,L)$ arises this way.\footnote{We follow the definition of Raney extensions given in  \cite{GSRA25}, which is a strengthening of that given in \cite{suarez25}.} 
	The latter can be shown by generalizing the Funayama envelope construction to Raney extensions. The MT-algebras of the form $\F R$ were characterized in \cite{GSRA25} as those MT-algebras that satisfy the $T_0$-separation axiom. Thus, the one-to-one correspondence between frames and MT-algebras satisfying the $T_D$-separation extends to a one-to-one correspondence between Raney extensions and MT-algebras satisfying the $T_0$-separation. 
	
	Our aim is to lift this one-to-one correspondence to a categorical equivalence. But, as with frames and MT-algebras, care is needed with morphisms. Indeed, not every Raney morphism lifts to an MT-morphism between their Funayama envelopes, although finding such an example is more involved than in the case of frames (see \cref{sec: equivalence}). We introduce the notion of Raney morphism between MT-algebras, which generalizes that of proximity morphism, and show that the category $\RMT$ of MT-algebras and Raney morphisms is equivalent to $\Raney$. The equivalence is established through the functors $\R : \RMT \to \Raney$ and $\F : \Raney \to \RMT$. In addition, we show that the full subcategory $\TMTR$ of $\RMT$ consisting of $T_0$-algebras is also equivalent to $\Raney$, and hence the reflector $\F\R : \RMT \to \TMTR$ is an equivalence. This counterintuitive phenomenon is explained by the fact that isomorphisms in $\RMT$ are not order-isomorphisms, but this anomaly disappears in $\TMTR$. We thus think of Raney extensions as the MT-algebras satisfying the $T_0$-separation axiom,  generalizing a similar correspondence between frames and the MT-algebras satisfying the $T_D$-separation axiom. In particular, each Raney extension $R$ has the $T_0$-hull $\F R$ generalizing the $T_D$-hull of each frame. 
	
	
	
	\section{Frames and MT-algebras} \label{Frames and MT-algebras}
	
	
	
	
	We recall that a complete lattice $L$ is a {\em frame} if it satisfies the join-infinite distributive law
	\[
	a \wedge \bigvee S = \bigvee \{ a \wedge s \mid s \in S \},
	\]
	and a {\em coframe} if it satisfies the meet-infinite distributive law
	\[
	a \vee \bigwedge S = \bigwedge \{ a \vee s \mid s \in S \}
	\]
	for all $a \in L$ and $S \subseteq L$.
	A {\em frame morphism} is a map between frames preserving arbitrary joins and finite meets; coframe morphisms are defined dually. We let $\Frm$ be the category of frames and frame morphisms.
	
	Standard examples of frames are the lattices $\mathcal O X$ of open sets of topological spaces. Indeed, the predominant approach to pointfree topology is through the category $\Frm$ (and its dual category $\bf Loc$ of locales); see \cite{Joh1982,PicadoPultr2012}. 

		While frames generalize the lattices of open sets of topological spaces, an earlier approach of McKinsey and Tarski \cite{MT1944} (see also \cite{Noeb54}) generalizes closure/interior operators on powerset algebras to arbitrary boolean algebras. This led to the theory of McKinsey-Tarski algebras, which provides an alternative (and more expressive) pointfree approach to topology; see \cite{BezhanishviliRR2023}.  
		
		We recall that an {\em interior operator} on a bounded lattice $L$ is a unary function $\square : L\to L$ satisfying Kuratowski's axioms 
		for all $a,b\in L$:
		\[
		\square 1=1, \quad \square(a \wedge b) = \square a \wedge \square b, \quad \square a \leq a, \quad \mbox{and} \quad \square a \leq \square \square a. 
		\]
		
		\begin{definition}
			$ $
			\begin{enumerate}
				\item A {\em McKinsey-Tarski algebra} or simply an {\em MT-algebra} is a pair $M=(B,\square )$ where $B$ is a
				complete boolean algebra and $\square$ is an interior operator
				on $B$.
				\item An {\em MT-morphism}  between MT-algebras $M$ and $N$ is a complete boolean morphism
				$f : M \to N$ such that $f(\square a) \leq \square f(a)$ for each $a \in M$.
				\item  Let $\MT$ be the category of MT-algebras and MT-morphisms.
			\end{enumerate}
		\end{definition}
		
		MT-algebras can alternatively be defined as pairs $(B,L)$ where $B$ is a complete boolean algebra and $L$ is a subframe of $B$. Indeed, given an MT-algebra $(B,\square)$, the set 
		\[
		L := \{ a \in B \mid a = \square a \}
		\]
		of fixpoints of $\square$ is a subframe of $B$. Moreover, every subframe $L$ of $B$ is the subframe of fixpoints of the right adjoint $\Box : B \to L$ of the embedding $e : L \to B$. Furthermore, a complete boolean morphism $f : M \to M'$ is an MT-morphism iff its restriction $f: L \to L'$ is well defined (in which case it is a frame morphism between the fixpoints). We thus arrive at the following:
		
		\begin{theorem}
			$\MT$ is isomorphic to the category whose objects are pairs $(B,L)$ where $B$ is a complete boolean algebra and $L$ is a subframe of $B$ and whose morphisms are complete boolean morphisms $f: B \to B'$ such that the restriction $f : L \to L'$ is well defined.
		\end{theorem}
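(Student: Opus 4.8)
The plan is to establish the isomorphism by exhibiting mutually inverse assignments on objects together with an identity assignment on morphisms, and then noting that functoriality is automatic. Concretely, I would send an MT-algebra $(B,\square)$ to the pair $(B,L)$ with $L=\{a\in B\mid a=\square a\}$, and conversely send a pair $(B,L)$ to $(B,\square_L)$, where $\square_L a:=\bigvee\{b\in L\mid b\le a\}$ is the largest element of $L$ below $a$, i.e.\ the value at $a$ of the composite of the right adjoint of the inclusion $e\colon L\to B$ with $e$ itself. On morphisms both assignments act as the identity on the underlying complete boolean morphism $f\colon B\to B'$; since the underlying maps are never altered, preservation of identities and of composition is immediate, and the entire content of the theorem is concentrated in the object-level claims and in the well-definedness equivalence.

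First I would check that $L$ really is a subframe of $B$. That $1\in L$ is exactly $\square 1=1$, and closure under binary meets is $\square(a\wedge b)=\square a\wedge\square b$. The only delicate point is closure under arbitrary joins: for $S\subseteq L$ one has $\square(\bigvee S)\le\bigvee S$ by the contraction axiom, while monotonicity of $\square$ (which follows from its preserving binary meets) gives $s=\square s\le\square(\bigvee S)$ for every $s\in S$, whence $\bigvee S\le\square(\bigvee S)$ and therefore $\bigvee S\in L$. Since finite meets and arbitrary joins in $L$ thus coincide with those computed in $B$, and $B$ is a frame, $L$ inherits the join-infinite distributive law and is a subframe. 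Conversely, given a subframe $L$, I would verify that $\square_L$ satisfies Kuratowski's axioms: $\square_L 1=1$ since $1\in L$; $\square_L a\le a$ is clear; idempotency follows because $\square_L a\in L$ by closure under joins; and the identity $\square_L(a\wedge b)=\square_L a\wedge\square_L b$ reduces to the remark that $\square_L x$ is the largest element of $L$ below $x$, so that $\square_L a\wedge\square_L b$, lying in $L$ by closure under finite meets and below $a\wedge b$, is below $\square_L(a\wedge b)$, the reverse inequality being monotonicity.

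Next I would confirm that the two object assignments are mutually inverse. Beginning with $(B,\square)$, the reconstructed $\square_L$ agrees with $\square$ because $\square a\in L$ and $\square a\le a$ give $\square a\le\square_L a$, while every $b\in L$ with $b\le a$ satisfies $b=\square b\le\square a$, giving $\square_L a\le\square a$. Beginning with $(B,L)$, the fixpoints of $\square_L$ recover $L$: each $a\in L$ equals the largest element of $L$ below itself, so $a=\square_L a$, and conversely each fixpoint of $\square_L$ lies in $L$ by closure under joins.

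Finally, for the morphisms I would fix a complete boolean morphism $f\colon B\to B'$ and show that $f(\square a)\le\square'f(a)$ for all $a$ iff $f(L)\subseteq L'$. For the forward direction, taking $a\in L$ yields $f(a)=f(\square a)\le\square'f(a)\le f(a)$, so $f(a)\in L'$. For the converse, for arbitrary $a$ the element $f(\square a)$ lies in $L'$ and is below $f(a)$ by monotonicity, hence below the largest such element $\square'f(a)$; and once $f(L)\subseteq L'$ the restriction $f|_L\colon L\to L'$ preserves finite meets and arbitrary joins because $f$ does and these coincide with the ambient operations, so it is a frame morphism. I expect the genuinely delicate steps to be the closure of the fixpoints under arbitrary joins and the repeated appeal to the description of $\square a$ as the largest open below $a$; the remaining verifications are routine bookkeeping assembling the constructions and the identity-on-morphisms rule into an isomorphism of categories.
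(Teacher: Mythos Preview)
Your proof is correct and follows essentially the same approach as the paper: the paper's argument is the brief paragraph preceding the theorem, which records that the fixpoints of $\square$ form a subframe, that every subframe arises as the fixpoints of the right adjoint to the inclusion, and that the MT-morphism condition is equivalent to the restriction being well defined. You have simply filled in the routine verifications that the paper leaves implicit.
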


			There is a close connection between MT-algebras and frames. For each MT-algebra $M$, let $\O M$ be the fixpoints of $\square$, which we call {\em open elements}. As we pointed out above, $\O M$ is a subframe of $M$, hence $\O M$ is a frame. Moreover, if $f : M \to M'$ is an MT-morphism, then its restriction $f|_{\O M}:\O M\to\O M'$ is a frame morphism. This defines a functor $\O:\MT\to\Frm$. By  \cite[Thm.~4.2]{BezhanishviliRR2023}, $\O$ is essentially surjective. For each frame $L$, the MT-algebra $M$ such that $L \cong \O M$ can be constructed by taking the {\em Funayama envelope} $\F L$ of $L$ \cite{funayama59}. One construction of $\F L$ is to take the MacNeille completion of the boolean envelope of $L$ \cite[Sec.~II.4]{Gra2011}, another is to take the booleanization of the frame of nuclei of $L$ \cite[Sec.~II.2]{Joh1982}, and the two are isomorphic by \cite{BGJ2013}. We will mainly use the former construction. 
			
			We next recall the characterization of MT-algebras which are isomorphic to $\F L$ for some frame $L$.
			For an MT-algebra $M$, let $\diamond := \neg\square\neg$ be the corresponding closure operator. We call $a\in M$ {\em closed} if it is a fixpoint of $\diamond$, and {\em locally closed} if $a = u \wedge c$, where $u$ is open and $c$ is closed. We let $\C M$ denote the closed elements and $\LC M$ the locally closed elements of $M$.
			
			\begin{definition}
				An MT-algebra is 
				a {\em $T_D$-algebra} if each element is a join of locally closed elements.
			\end{definition}
			
			The following result provides the desired characterization:
			\begin{proposition} {\em \cite[Thm.~6.5]{BezhanishviliRR2023}} \label{lem: fom is td}  
				An MT-algebra $M$ is a $T_D$-algebra iff $M \cong \F \O M$.
			\end{proposition}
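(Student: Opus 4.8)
The plan is to work with the concrete model $\F L = \BofL$, the MacNeille completion of the boolean envelope $B(L)$ of $L$, recalling that $L$ sits inside $\F L$ as its frame of open elements and that $B(L)$ is both join- and meet-dense in $\F L$. Since $L$ is a distributive lattice, every element of $B(L)$ has a disjunctive normal form $\bigvee_i (u_i \wedge \neg v_i)$ with $u_i,v_i \in L$; as the closed elements of $\F L$ are precisely the complements $\neg v$ of open elements, each $u_i \wedge \neg v_i$ is locally closed. Hence every element of $B(L)$ is a finite join of locally closed elements, and by join-density every element of $\F L$ is a join of locally closed elements. This already disposes of ($\Leftarrow$): if $M \cong \F\O M$, transporting this observation shows each element of $M$ is a join of locally closed elements, so $M$ is a $T_D$-algebra.

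For ($\Rightarrow$), suppose $M = (B,\square)$ is a $T_D$-algebra and write $L = \O M \subseteq B$. Let $A$ be the boolean subalgebra of $B$ generated by $L$. The first step is to identify $A$ with the boolean envelope: the bounded-lattice embedding $L \hookrightarrow B$ extends, by the universal property of $B(L)$, to a boolean homomorphism $B(L) \to A$, and this is an isomorphism because the free boolean extension of a bounded distributive lattice embeds into any boolean algebra containing it as a bounded sublattice (every prime filter of $L$ extends to an ultrafilter of $B$ restricting back to it). The second step is to show that $A$ is both join- and meet-dense in $B$, which identifies $B$ with the MacNeille completion of $A \cong B(L)$, that is, with $\F L$.

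Join-density is exactly where the hypothesis enters: by $T_D$, every $b \in B$ is a join of locally closed elements $u \wedge \neg v \in A$ lying below $b$, so $b = \bigvee\{a \in A : a \le b\}$. Meet-density then follows formally by De Morgan: applying join-density to $\neg b$ and complementing gives $b = \bigwedge\{a \in A : a \ge b\}$, using that $A$ is closed under complementation. By the uniqueness of the MacNeille completion among the completions in which a subalgebra is join- and meet-dense, there is an order-isomorphism $B \cong \overline{A} = \BofL = \F\O M$ that fixes $A$, and in particular restricts to the identity on $L = \O M$. Since an order-isomorphism of complete boolean algebras is automatically a complete boolean isomorphism, and since $\square$ is recovered as the right adjoint of the inclusion of open elements, an order-isomorphism fixing $\O M$ is an MT-isomorphism, completing the proof.

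The main obstacle, I expect, is the first step of the forward direction: verifying that $A$ is freely generated by $L$, i.e. that $B(L) \to B$ is injective, so that $\overline{A}$ really is $\F L$ rather than the MacNeille completion of some proper quotient. Once this identification is secured, the densities are routine --- join-density is a verbatim restatement of $T_D$ and meet-density is its De Morgan dual --- and the passage from order-isomorphism to MT-isomorphism is immediate. It is worth double-checking that the normal-form argument genuinely lands in the locally closed elements, i.e. that the closed elements of $\F L$ (and of $M$) are exactly the complements of open elements, since this is what links the combinatorial normal form in $B(L)$ to the order-theoretic notion of local closedness used in the definition of a $T_D$-algebra.
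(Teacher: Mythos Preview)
The paper does not prove this statement; it is quoted verbatim as \cite[Thm.~6.5]{BezhanishviliRR2023} with no argument supplied. So there is nothing in the present paper to compare your proposal against directly.

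That said, your proof is correct and self-contained. A few remarks:

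For the backward direction, your normal-form argument is exactly right: in $\F L$ the opens are $L$, the closed elements are precisely the complements $\neg v$ of opens, so each $u_i \wedge \neg v_i$ is locally closed, and join-density of $B(L)$ in its MacNeille completion finishes it.

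For the forward direction, the step you flag as the obstacle---injectivity of the canonical map $B(L) \to B$ when $L$ embeds as a bounded sublattice of the boolean algebra $B$---is a standard fact about free boolean extensions of distributive lattices. The present paper invokes the very same fact later (for $\S M$ in place of $\O M$), citing \cite[p.~99]{balbes74}: ``the boolean envelope of $\S M$ is isomorphic to the boolean subalgebra $\B\S M$ of $M$ generated by $\S M$.'' So you may simply cite this and move on; your prime-filter sketch is also fine but unnecessary given the reference.

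The remaining steps---join-density from the $T_D$ hypothesis, meet-density by De~Morgan, identification of $B$ with the MacNeille completion, and transport of $\square$ via the fixed frame of opens---are all routine and correctly stated.
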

			
			Nevertheless, taking the Funayama envelope does not lift to a functor from $\Frm$ to $\MT$ since the lift may not be a complete boolean morphism. This was remedied in \cite{bezhanishvili2025funayamaenvelopetdhullframe}, where the notion of proximity morphism between MT-algebras was introduced and the resulting category was shown to be equivalent to $\Frm$. We recall the details below. 
			
			\begin{definition} \label{def: proximitymor}
				A map $f:M \to M'$ between MT-algebras is a \emph{proximity morphism} provided the following conditions are satisfied:
				\begin{enumerate}[label=\upshape(P\arabic*)]
					\item $f| _{\O M} : \O M\to\O N$ is a frame morphism.\label{def: proximitymor 1}
					\item $f(a \wedge b)=f(a) \wedge f(b)$ for each $a,b \in M$.\label{def: proximitymor 2}
					\item $f(\bigvee S)=\bigvee\{f(s) \mid s \in S\}$ for each finite $S \subseteq \LC M$.\label{def: proximitymor 3}
					\item $f(a)=\bigvee\{ f(x) \mid x \in \LC M, \, x \leq a\}$ for each $a \in M$.\label{def: proximitymor 4}
				\end{enumerate}
			\end{definition}
			
			Let $\PMT$ be the category of MT-algebras and proximity morphisms between them. The composition of $f :M_1 \to M_2$ and $g: M_2\to M_3$ in $\PMT$ is defined by 
			\[
			(g \star f)(a) = \bigvee \{ gf(x) \mid x \in \LC M_1, \ x \le a \} 
			\]
			and the identity $id^P_M :M \to M$ by
			\[
			id^P_M(a) = \bigvee \{ x \in \LC M \mid x \le a \}.
			\]
			We also let $\TDMTP$ be the full subcategory of $\MTP$ consisting of $T_D$-algebras. We then have:
			
			\begin{theorem} {\em \cite[Sec.~4]{bezhanishvili2025funayamaenvelopetdhullframe} } \label{thm: TD hull of Frm}
				\begin{enumerate}[ref=\thetheorem(\arabic*)]
					\item 
					$\O:\PMT \to \Frm$ and $\F : \Frm \to \PMT$ are functors, yielding an equivalence of $\PMT$ and $\Frm$. \label[theorem]{MTpequivalenttoframes}
					\item This equivalence restricts to an equivalence between $\TDMTP$ and $\Frm$. Consequently, the reflector $\F\O : \MTP \to \TDMTP$ is an equivalence. \label[theorem]{MTpequivalenttoframes 2}
				\end{enumerate}
			\end{theorem}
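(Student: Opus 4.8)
The plan is to prove the sharper statement that $\O\colon\PMT\to\Frm$ is an equivalence with $\F$ as a quasi-inverse, by showing that $\O$ is a functor which is fully faithful and essentially surjective. First I would record that every proximity morphism $f$ is monotone: by (P2), $a\le b$ gives $f(a)=f(a\wedge b)=f(a)\wedge f(b)\le f(b)$. Together with the fact that each open $u$ is locally closed and is the greatest locally closed element below itself, this makes $\O$ a functor for the twisted composition of $\PMT$: for $f\colon M\to M'$ and $g\colon M'\to M''$ and open $u$ we get $id^P_M(u)=u$ and, since $gf$ is monotone and $f(u)$ is open by (P1), $(g\ast f)(u)=gf(u)=\O g(\O f(u))$. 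Hence $\O(id^P_M)=\id$ and $\O(g\ast f)=\O g\circ\O f$.

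The crux is that $f\mapsto f|_{\O M}$ is a bijection $\PMT(M,M')\to\Frm(\O M,\O M')$. Faithfulness rests on a clean observation: if $v\in\O M$ then both $v$ and $\neg v$ lie in $\LC M$, so applying (P3) to $\{v,\neg v\}$ and (P2) to $v\wedge\neg v$ yields $f(v)\vee f(\neg v)=f(1)=1$ and $f(v)\wedge f(\neg v)=f(0)=0$ (using that $f|_{\O M}$ preserves $0$ and $1$); since $M'$ is boolean, this forces $f(\neg v)=\neg f(v)$. As every closed element has the form $\neg v$ and every locally closed element the form $u\wedge\neg v$ with $u,v\in\O M$, conditions (P2) and (P4) now recover $f$ from $h:=f|_{\O M}$ via
\[
f(a)=\bigvee\{\, h(u)\wedge\neg h(v) \mid u,v\in\O M,\ u\wedge\neg v\le a \,\}.
\]
In particular $f$ is determined by $h$, so $\O$ is faithful.

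For fullness I would, given any frame morphism $h\colon\O M\to\O M'$, define $\tilde h$ by the same formula and verify (P1)--(P4); this is where the real work lies, and I expect it to be the main obstacle. That $\tilde h|_{\O M}=h$ (whence (P1)) is a short distributive computation: for open $w$ the pair $(u,v)=(w,0)$ contributes $h(w)$, while $u\wedge\neg v\le w$ forces $u\le w\vee v$ and hence $h(u)\wedge\neg h(v)\le h(w)$. Checking that $\tilde h$ takes the value $h(u)\wedge\neg h(v)$ on a locally closed $u\wedge\neg v$, and then (P2), (P3), and (P4), requires further join manipulations, with (P3) --- preservation of finite joins of locally closed elements --- the delicate case. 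Essential surjectivity is immediate from the known isomorphism $\O\F L\cong L$ for the Funayama envelope. Hence $\O$ is an equivalence, and identifying its quasi-inverse on morphisms with the formula above exhibits $\F$ as a functor and as the quasi-inverse, proving part (1).

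For part (2), I would first note that $\F L$ is always a $T_D$-algebra: from $\O\F L\cong L$ we get $\F\O\F L\cong\F L$, so $\F L\cong\F\O\F L$ and \cref{lem: fom is td} applies. Thus $\F$ corestricts to $\TDMTP$; as $\TDMTP$ is full in $\PMT$, the restriction of $\O$ remains fully faithful and is essentially surjective onto $\Frm$ (each $L\cong\O\F L$ with $\F L\in\TDMTP$), giving $\TDMTP\simeq\Frm$. The reflector $\F\O\colon\MTP\to\TDMTP$ is then the composite of the equivalences $\O\colon\MTP\to\Frm$ and $\F\colon\Frm\to\TDMTP$, hence an equivalence. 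This is precisely the counterintuitive phenomenon flagged in the introduction: because $\O$ is fully faithful, the frame isomorphism $\O M\cong\O\F\O M$ lifts to a $\PMT$-isomorphism $M\cong\F\O M$ for every $M$, whether or not it is a $T_D$-algebra, even though no order-isomorphism need exist.
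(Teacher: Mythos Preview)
The paper does not give its own proof of this theorem: it is stated as a result quoted from \cite[Sec.~4]{bezhanishvili2025funayamaenvelopetdhullframe}, so there is no in-paper argument to compare against. Your strategy of showing that $\O$ is fully faithful and essentially surjective is a sound route to the equivalence, and your faithfulness argument (recovering $f$ from $f|_{\O M}$ via $f(\neg v)=\neg f(v)$ for open $v$) is correct and clean. Part~(2) is handled correctly as well.

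The only soft spot is the fullness step, specifically the verification of \ref{def: proximitymor 3} for $\tilde h$, which you flag as ``delicate'' but do not carry out. The cleanest way to close this --- and the device used both in the cited paper and, for saturated elements, throughout the present one --- is to invoke the boolean envelope: the boolean subalgebra of $M$ generated by $\O M$ is (isomorphic to) the free boolean extension $\B\O M$, so the frame morphism $h\colon\O M\to\O M'\hookrightarrow M'$ extends uniquely to a boolean morphism $\B h$ on that subalgebra. Since finite joins of locally closed elements are exactly the elements of $\B\O M$, and $\B h$ agrees with your $\tilde h$ on locally closed elements, \ref{def: proximitymor 3} drops out immediately from $\B h$ preserving finite joins. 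Without this observation the direct verification of \ref{def: proximitymor 3} by ``join manipulations'' is genuinely awkward, so it is worth naming the tool rather than leaving it implicit.
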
 
			
			
			
			
			
			The equivalence of $\MTP$ and $\TDMTP$ is explained by the fact that isomorphisms in $\MTP$ are not order-isomorphisms. Indeed, each MT-algebra $M$ is $\MTP$-isomorphic to its $T_D$-reflection $\F\O M$. The situation improves in $\TDMTP$, where isomorphisms are indeed order-isomorphisms (see \cite[Prop.~4.22]{bezhanishvili2025funayamaenvelopetdhullframe}). 
			
			By the above,
			we can identify frames 
			with $T_D$-algebras. In particular, for each frame $L$, we think of $\F L$ as the $T_D$-hull of $L$. 
			
			\section{MT-algebras and Raney extensions} \label{sec: MT and RE}
			
			Another alternative pointfree approach to topology that is more expressive than that of frames is the formalism of Raney extensions \cite{Sua24,suarez25}.
			For a complete lattice $C$,  we say that $L \subseteq C$ is a {\em subframe} of $C$ if $L$ is a frame in the order inherited from $C$ and the embedding $e: L \to C$ preserves arbitrary joins and finite meets.\footnote{Observe that $C$ itself may not be a frame. 
			} 
			
			\begin{definition}{\cite[Sec.~2]{Sua24}}\label{Raneydef}
				\begin{enumerate}
					\item 
					A {\em Raney
						extension} is a pair $R=(C, L)$ such that 
					\begin{enumerate}
						\item $C$ is a coframe. 
						\item $L$ is a subframe of $C$ that meet-generates $C$.
						\item $a \wedge \bigvee S = \bigvee \{ a \wedge s \mid s \in S \}$ for each $a\in C$ and $S\subseteq L$. 
					\end{enumerate}
					\item A {\em morphism} between Raney extensions $R=(C,L)$ and $R'=(C',L')$ is a coframe morphism $f:C \to C'$ such that the restriction $f|_L: L \to L'$ is a well-defined frame morphism.
					\item Let $\Raney$ be the category of Raney extensions and morphisms between them.
				\end{enumerate}
			\end{definition}
			
			\begin{remark}\label{Raneywithinterioroperator}
				Raney extensions can equivalently be defined as pairs $(C,\square)$ where $C$ is a coframe and $\square$ is an interior operator on $C$ such that the fixpoints $L := \{ a \in C \mid a = \square a\}$ satisfy (1b) and (1c). Thus, Raney extensions provide a generalization of Raney algebras of \cite{BH2020}.
				Raney extensions should not be confused with those 
				in \cite{BH2023}, where a different formalism of Raney extensions is introduced to 
				characterize stable compactifications of $T_0$-spaces.
			\end{remark}
			
			
			
			
			
			The close connection between MT-algebras and frames extends to Raney extensions. In a nutshell, if frames can be thought of as MT-algebras satisfying the $T_D$-separation, Raney extensions can be thought of as those MT-algebras that satisfy the $T_0$-separation. To define the latter, for an MT-algebra $M$, we recall that $a \in M$ is {\em saturated} if it is a meet of open elements. Let $\S M$ be the saturated elements of $M$. 
			
			\begin{definition}
				An MT-algebra $M$ is a {\em $T_0$-algebra} if each element is a join of elements of the form $s \wedge c$, where $s\in\S M$ and $c\in \C M$.
			\end{definition}
			
			
			We let $\B \S M$ denote the boolean subalgebra of $M$ generated by $\S M$. Then each element of $\B \S M$ can be written as $
			a = \bigvee_{i=1}^n (s_i \wedge \neg t_i),
			$
			where 
			$s_i, t_i \in \S M $ 
			(see, e.g., \cite[p.~74]{RS1963}). But each $t_i$ is a meet of open elements, so $\neg t_i$ is a join of closed elements, yielding that $\{ s \wedge c \mid s \in\S M, \, c \in \C M\}$ join-generates $\B \S M$. Thus, we obtain:
			
			\begin{proposition}\label{prop: char of T0}
				An MT-algebra $M$ is a $T_0$-algebra iff $\B\S M$ join-generates $M$. 
			\end{proposition}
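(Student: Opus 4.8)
The plan is to build directly on the normal form already established in the paragraph immediately preceding the statement, which in fact carries out the combinatorial heart of the argument: every element of $\B\S M$ can be written as $\bigvee_{i=1}^n (s_i \wedge \neg t_i)$ with $s_i, t_i \in \S M$, and since each $\neg t_i$ is then a join of closed elements, the family $\{\, s \wedge c \mid s \in \S M,\ c \in \C M \,\}$ join-generates $\B\S M$. Granting this, I would reduce the proposition to two elementary facts: the transitivity of join-generation, and the inclusions $\O M \subseteq \S M \subseteq \B\S M$ and $\C M \subseteq \B\S M$.

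For the backward implication I would argue as follows. Assume $\B\S M$ join-generates $M$. We know from the preceding paragraph that $\{\, s \wedge c \mid s \in \S M,\ c \in \C M \,\}$ join-generates $\B\S M$. Join-generation is transitive: substituting a join-expression for each generator of $\B\S M$ into a join-expression for an arbitrary $a \in M$ and reassociating the resulting double join in the complete lattice $M$ exhibits $a$ as a join of elements $s \wedge c$. Hence every element of $M$ is a join of elements of the form $s \wedge c$ with $s \in \S M$ and $c \in \C M$, which is exactly the definition of $M$ being a $T_0$-algebra.

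For the forward implication I would assume $M$ is a $T_0$-algebra and show it suffices to check that each building block $s \wedge c$ lies in $\B\S M$. The inclusion $\S M \subseteq \B\S M$ holds by definition, so $s \in \B\S M$. For the closed part, note that $c \in \C M$ gives $\neg c$ open; every open element is (trivially) a meet of open elements, hence saturated, so $\neg c \in \S M \subseteq \B\S M$. Since $\B\S M$ is a boolean subalgebra and therefore closed under complementation, $c = \neg\neg c \in \B\S M$, whence $s \wedge c \in \B\S M$. Thus each $a \in M$, being a join of such $s \wedge c$, is a join of elements of $\B\S M$, i.e.\ $\B\S M$ join-generates $M$.

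I do not expect a real obstacle, precisely because the disjunctive-normal-form computation — the only genuinely non-trivial ingredient — is supplied before the statement; what remains are the routine verifications above. The one point I would treat with a moment's care is the claim that the normal form lands in $\S M$ itself rather than in some larger meet-closed set, i.e.\ that one may collapse $\neg t_{i,1} \wedge \cdots \wedge \neg t_{i,k} = \neg(t_{i,1} \vee \cdots \vee t_{i,k})$ with the inner join still saturated. This uses that $\S M$ is closed under finite joins, which follows from the coframe distributivity of the complete boolean algebra $M$ together with the fact that open elements are closed under finite joins; once this is noted, the proof is complete.
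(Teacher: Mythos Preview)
Your proposal is correct and follows essentially the same approach as the paper: the paper treats the proposition as an immediate consequence of the preceding paragraph (there is no separate proof block, only ``Thus, we obtain:''), and what you have written simply spells out the two directions and the transitivity-of-join-generation step that the paper leaves implicit. Your closing remark about $\S M$ being closed under finite joins is a genuine point the paper absorbs into its citation of the normal-form result, and your justification via coframe distributivity in the complete boolean algebra is the right one.
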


			Each MT-algebra $M$ gives rise to the Raney extension $\R M := (\S M, \O M)$ (see \cite[Prop.~5.2]{GSRA25}). Conversely, for each Raney extension $R=(C,L)$,
			we can generalize the Funayama envelope construction to produce an MT-algebra. Indeed, let $\F C$ be the MacNeille completion of the boolean envelope of $C$. Since all joins in $L$ distribute over binary meets in $C$, the embedding $L\to\F C$ has a right adjoint, which defines an interior operator $\square$ on $\F C$. Thus, $(\F C,\square)$ is an MT-algebra and $\O(\F C,\square) \cong L$. We call the MT-algebra $(\F C,\square)$ the {\em Funayama envelope} of $R$ and denote it by $\F R$.
			The following result characterizes Funayama envelopes of Raney extensions as $T_0$-algebras:
			
			\begin{proposition} {\em \cite[Thm.~5.4]{GSRA25}} \label{lem: Raney is t0}  
				An MT-algebra $M$ is a $T_0$-algebra iff $M \cong \F \R M$.
			\end{proposition}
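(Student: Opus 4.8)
The plan is to mirror the proof of \cref{lem: fom is td} for the $T_D$ case, replacing the frame $\O M$ of open elements by the coframe $\S M$ of saturated elements and invoking \cref{prop: char of T0} in place of the corresponding characterization of $T_D$-algebras. I would prove the two implications separately: that every $T_0$-algebra is isomorphic to $\F\R M$, and that every MT-algebra of the form $\F R$ is a $T_0$-algebra, the latter combined with the fact that $T_0$-ness is preserved by MT-isomorphisms.

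For the forward direction, suppose $M$ is a $T_0$-algebra. The key preliminary observation is that, although $\S M$ is not closed under arbitrary joins in $M$ (its coframe joins are the saturations of the joins taken in $M$), it \emph{is} closed under finite joins in $M$. Indeed, if $s=\bigwedge_i u_i$ and $t=\bigwedge_j v_j$ with $u_i,v_j$ open, then, since $M$ is a complete boolean algebra and hence a coframe, $s\vee t=\bigwedge_{i,j}(u_i\vee v_j)$, which is again a meet of open elements because $\O M$ is closed under finite joins. Consequently the inclusion $\S M\hookrightarrow M$ is a bounded-lattice embedding of the distributive lattice underlying the coframe $\S M$. Any such embedding of a bounded distributive lattice into a boolean algebra extends to an embedding of its boolean envelope: writing a general element of the boolean envelope as $\bigvee_k (a_k\wedge\neg b_k)$ with $a_k,b_k\in\S M$, it maps to $0$ in $M$ iff each $a_k\le b_k$ iff it is already $0$ in the envelope. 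Hence the boolean envelope of $\S M$ is isomorphic to $\B\S M$, the boolean subalgebra of $M$ generated by $\S M$.

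Now I would invoke \cref{prop: char of T0}: since $M$ is $T_0$, the subalgebra $\B\S M$ join-generates $M$, and being closed under complements it is therefore also meet-dense in $M$ (apply join-density to complements). Thus $M$ is the MacNeille completion of $\B\S M$, so $M$ is isomorphic, as a complete boolean algebra, to $\F(\S M)$, the algebra underlying $\F\R M$. It then remains to match the interior operators: under this identification the embedding $\O M\hookrightarrow\F(\S M)$ is just the inclusion $\O M\hookrightarrow M$, whose right adjoint is $\square_M$; since the interior operator of $\F\R M$ is by definition the right adjoint of this same embedding, the two operators coincide and $M\cong\F\R M$ as MT-algebras. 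For the converse it suffices to show that $\F R$ is a $T_0$-algebra for every Raney extension $R=(C,L)$. Here the open elements of $\F R$ are a copy of $L$, and each $c\in C$ is saturated in $\F R$ because $L$ meet-generates $C$; thus $C$, and hence the boolean subalgebra it generates, lies in $\B\S(\F R)$. Since this subalgebra is join-dense in its MacNeille completion $\F C$, it join-generates $\F R$, and \cref{prop: char of T0} yields that $\F R$ is $T_0$.

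The main obstacle I anticipate is the forward direction, specifically the reconstruction of $M$ from $\S M$ via the Funayama construction. The delicate point is that $\S M$ fails to be a subframe of $M$ — its infinite joins are not computed in $M$ — so a priori it is unclear that the boolean envelope built from the abstract coframe agrees with the concretely generated subalgebra $\B\S M\subseteq M$. Resolving this hinges on the observation that $\S M$ is nevertheless closed under the \emph{finite} lattice operations of $M$, which is exactly the structure the boolean envelope sees; once this is in place, the MacNeille and adjunction steps are routine. In the converse, the only point needing care is verifying that $C$ lands inside the saturated elements of $\F R$, which follows from meet-generation but relies on the compatibility of meets of opens between $C$ and $\F C$.
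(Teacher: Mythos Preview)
The paper does not prove this statement; it is quoted from \cite[Thm.~5.4]{GSRA25} without argument, so there is no in-paper proof to compare against. Your proposal is correct and is essentially the natural argument one would expect, paralleling the $T_D$ case as you intend.

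A few remarks on points you flag. For the forward direction, the identification of the boolean envelope of $\S M$ with $\B\S M$ is in fact stated later in the paper (\cref{sec: equivalence}, citing \cite[p.~99]{balbes74}), so you may simply invoke it. Your MacNeille step is clean: join-density of $\B\S M$ from \cref{prop: char of T0} plus closure under complementation gives meet-density, and then matching the interior operators via the right adjoint of $\O M\hookrightarrow M$ is exactly right. For the converse, the delicate point you identify---that meets of elements of $L$ computed in $C$ agree with those computed in $\F C$---is genuine and is precisely what underlies the isomorphism $\rho_R:R\to\R\F R$ used elsewhere in the paper. It is resolved by the coframe law in $C$: if $b=\bigvee_i(a_i\wedge\neg b_i)\in\B C$ lies below every $s\in S\subseteq L$, then $a_i\le b_i\vee s$ for all $s$, hence $a_i\le\bigwedge_s(b_i\vee s)=b_i\vee\bigwedge_C S$, so $b\le\bigwedge_C S$ in $\B C$; thus the meet is preserved into $\B C$, and the MacNeille embedding does the rest. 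With that filled in, your sketch is a complete proof.
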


			In \cref{sec: equivalence} we will show that this correspondence between MT-algebras and Raney extensions lifts to a categorical equivalence. This requires a generalization of proximity morphisms, which is the subject of next section.

			\section{Raney morphisms between MT-algebras} \label{sec: Raney morphisms}

			As we saw in the previous section, with each MT-algebra $M$ we can associate the Raney extension $\R M = (\S M, \O M)$. If $f : M \to M'$ is an MT-morphism, then it is straightforward to see that the restriction $f|_{\R M} : \R M \to \R M'$ is a well-defined morphism between Raney extensions, 
			thus yielding a functor $\MT \to \Raney$. However, 
			this functor is not full (see \cref{Cantor example}). 
			To remedy this, we introduce a different notion of morphism, which generalizes that of a proximity morphism, between MT-algebras.
			To justify the definition, 
			%
			%
			%
			we recall from \cite{bezhanishvili2025funayamaenvelopetdhullframe} that each boolean subalgebra $B$ of a boolean algebra $A$ gives rise to a proximity-like relation on $A$ given by
			\[
			a \prec_B c \Longleftrightarrow \exists b \in B : a \le b \le c.
			\]
			It is straightforward to verify that this relation satisfies the following conditions:
			\begin{enumerate}
				\item[(S1)] $1\prec_B 1$;
				\item[(S2)] $a\prec_B c$ implies $a\leq c$;
				\item[(S3)] $a\leq a'\prec_B c'\leq c$ implies $a\prec_B c$;
				\item[(S4)] $a\prec_B c,d$ implies $a\prec_B c\wedge d$;
				\item[(S5)] $a\prec_B c$ implies $\neg c\prec_B \neg a$;
				\item[(S6)] $a\prec_B c$ implies that there is $b\in B$ with $a\prec_B b\prec_B c$.
			\end{enumerate}
			
			\begin{remark}\label{rem: de Vries proximity} 
				As was pointed out in \cite[Sec.~3]{bezhanishvili2025funayamaenvelopetdhullframe}, the above axioms are the standard proximity axioms on a boolean algebra, with (S6) being a strengthening of the usual in-betweenness axiom. Moreover, $\prec_B$ is a de~Vries proximity (see \cite{deV62,Bezh2010}) iff 
				$a = \bigvee \{c \in A \mid c \prec_B a\}$, which is equivalent to $B$ join-generating $A$.
			\end{remark}
			
			We will mainly be interested in $\prec_{\B \S M}$, where we 
			recall 
			that $\B \S M$ is the boolean subalgebra of $M$ generated by $\S M$. To simplify notation, we write $\prec$ for $\prec_{\B \S M}$. 
			
			\begin{lemma}\label{lem: proximity and T0}
				Let $M$ be an MT-algebra. The $\prec$ relation is a de Vries proximity on $M$ iff $M$ is a $T_0$-algebra.
			\end{lemma}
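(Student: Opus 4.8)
The plan is to prove the biconditional by combining the characterization in \cref{prop: char of T0} with the observation recorded in \cref{rem: de Vries proximity}. The key link is that the $\prec$ relation on $M$ is exactly $\prec_{\B\S M}$, so the general fact from the remark specializes: $\prec$ is a de~Vries proximity iff $a = \bigvee\{c \in M \mid c \prec a\}$ for all $a$, and this last condition is equivalent to $\B\S M$ join-generating $M$. Meanwhile \cref{prop: char of T0} states that $M$ is a $T_0$-algebra iff $\B\S M$ join-generates $M$. Chaining these equivalences gives the result almost immediately, so the bulk of the work is unpacking and justifying that the abstract criterion really does apply here.

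First I would recall, from the discussion preceding the lemma, that $\prec = \prec_{\B\S M}$ where $\B\S M$ is a boolean subalgebra of the complete boolean algebra $M$. Since the relation $\prec_B$ for a boolean subalgebra $B \le A$ automatically satisfies axioms (S1)--(S6), the only de~Vries axiom that can fail is the approximation (join-density) axiom
\[
a = \bigvee \{c \in M \mid c \prec a\} \qquad \text{for all } a \in M.
\]
I would cite \cref{rem: de Vries proximity} to record that $\prec_B$ is a de~Vries proximity precisely when this holds, and that this is in turn equivalent to $B$ join-generating $A$. Applying this with $B = \B\S M$ and $A = M$ yields: $\prec$ is a de~Vries proximity iff $\B\S M$ join-generates $M$.

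Next I would invoke \cref{prop: char of T0}, which says $M$ is a $T_0$-algebra iff $\B\S M$ join-generates $M$. Combining the two equivalences gives that $\prec$ is a de~Vries proximity iff $M$ is a $T_0$-algebra, which is exactly the claim. The main (indeed only) obstacle is making sure the approximation axiom is genuinely the content of the equivalence in \cref{rem: de Vries proximity}: I expect the remark, as stated, to already identify $a = \bigvee\{c \mid c \prec_B a\}$ with $B$ join-generating $A$, so that no separate argument is needed. If that identification required elaboration, the point to verify would be that whenever $a$ is a join of elements of $B$, each such join-element $b \le a$ satisfies $b \prec_B a$ (taking the witness to be $b$ itself), giving $a \le \bigvee\{c \mid c \prec a\}$; the reverse inequality is automatic from (S2). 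Since \cref{rem: de Vries proximity} already supplies this, the proof is a short chain of citations rather than a fresh calculation.
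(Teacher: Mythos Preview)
Your proposal is correct and follows exactly the same approach as the paper: invoke \cref{rem: de Vries proximity} to reduce the de~Vries condition on $\prec = \prec_{\B\S M}$ to $\B\S M$ join-generating $M$, and then apply \cref{prop: char of T0}. The paper's proof is just the two-sentence version of what you wrote, without the supplementary unpacking of the approximation axiom.
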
 
			\begin{proof}
				By \cref{rem: de Vries proximity}, $\prec$ is a de~Vries proximity on $M$ iff $\B \S M$ join-generates $M$. The latter is equivalent to $M$ being a $T_0$-algebra by \cref{prop: char of T0}.
				%
				%
				%
			\end{proof}
			
			
			We are ready to introduce the notion of a Raney morphism between MT-algebras, which is the central concept of this article.
			
			\begin{definition} \label{raneymor}
				A {\em Raney morphism} between MT-algebras is a function $f:M \to M'$ satisfying the following conditions:
				\begin{enumerate}
					[label=\upshape(R\arabic*)]
					\item $ f| _{\S M} : \S M\to\S M'$ is a coframe morphism. \label{def: raneymor 1}
					\item $ f| _{\O M} : \O M\to\O M'$ is a frame morphism.\label{def: raneymor 2}
					\item $f(a \wedge b)=f(a) \wedge f(b)$ for each $a,b \in M$.\label{def: raneymor 3}
					\item $f(x \vee y)= f(x) \vee f(y)$ for each $x, y \in \B \S M$. \label{def: raneymor 4}
					\item $f(a)=\bigvee\{ f(x) \mid x \in \B \S  M, \, x \leq a\}$ for each $a \in M$. \label{def: raneymor 5}
				\end{enumerate}
			\end{definition}
			
			\begin{remark}
				Comparing the above definition to \cref{def: proximitymor}, observe that while each Raney morphism satisfies \ref{def: proximitymor 1}--\ref{def: proximitymor 3}, in general it need not satisfy \ref{def: proximitymor 4}. 
			\end{remark}
			
			\begin{lemma}\label{l:raneymorproperties}
				Let $f:M \to M'$ be a Raney morphism between MT-algebras.
				\begin{enumerate}[ref=\thelemma(\arabic*)]
					\item $f(\neg x)=\neg f(x)$ for each $x \in \S M$.\label[lemma]{l:raneymorproperties 1}
					\item $f|_{\B \S M}:\B \S M \to \B \S M'$ is a boolean morphism. \label[lemma]{l:raneymorproperties 2} 
					\item If $x \in \LC M$ then $f(x) \in \LC M'$. \label[lemma]{l:raneymorproperties3}
				\end{enumerate}
			\end{lemma}
			\begin{proof}
				Let $f:M\to M'$ be a Raney morphism between MT-algebras.
				\begin{enumerate}
					\item  Let $x\in\S M$. Then $x, \neg x \in \B \S M$. Therefore, by \ref{def: raneymor 4} and \ref{def: raneymor 2},
					\[
					f(x) \vee f(\neg x)=f(x \vee \neg x)=f(1)=1.
					\]
					Moreover, by \ref{def: raneymor 3} and \ref{def: raneymor 2}, 
					\[
					f(x) \wedge f(\neg x) = f(x \wedge \neg x) = f(0) = 0.
					\]
					Thus, $f(\neg x)=\neg f(x)$. 
					\item Let $x \in \B \S M$. As we pointed out in the previous section, 
					\[
					x=\bigvee_{i=1}^n\{a_i \wedge \lnot b_i \mid a_i, b_i \in \S M\}.
					\]
					Therefore, 
					by \ref{def: raneymor 4}, \ref{def: raneymor 3}, 
					and \hyperref[l:raneymorproperties 1]{(1)}, we get   \[
					f(x)=\bigvee_{i=1}^n\{f(a_i) \wedge \neg f(b_i) \mid f(a_i), f(b_i) \in \S M'\}.
					\]
					Thus, $f(x) \in \B \S M'$, and so $f|_{\B \S M}$ is well-defined. Moreover, by \ref{def: raneymor 3} and \ref{def: raneymor 4}, it is a lattice morphism, and by \ref{def: raneymor 1} or \ref{def: raneymor 2}, it is bounded. Hence, $f|_{\B \S M}$ is a boolean morphism.
					\item From $x \in \LC M$ it follows that $x = u \wedge \neg v$ with $u,v \in \O M$. But then $f(u), f(v) \in \O M'$ by \ref{def: raneymor 2}. Since $u, v \in \B \S M$, (2) implies that $f(x) = f(u \wedge \neg v) = f(u) \wedge \neg f(v)$. Thus, $f(x) \in \LC M'$. \qedhere
				\end{enumerate}

				%
			\end{proof}
			
			\begin{lemma} \label{lem: eqv morphisms}
				Let $f : M \to M'$ be a map between MT-algebras satisfying \ref{def: raneymor 1}, \ref{def: raneymor 2}, \ref{def: raneymor 3},  and \ref{def: raneymor 5}. The following are equivalent:
				\begin{enumerate}
					[ref=\thelemma(\arabic*)]
					\item $f$ satisfies \ref{def: raneymor 4}; that is, $f$ is a Raney morphism.
					\item $a_1 \prec b_1$ and $a_2 \prec b_2$ imply $f(a_1 \vee a_2) \prec f(b_1) \vee f(b_2)$ for each $a_i,b_i \in M$.
					\item $a \prec b$ implies $\neg f(\neg a) \prec f(b)$ for each $a,b \in M$. \label[lemma]{lem: eqv morphisms 3}
				\end{enumerate}
			\end{lemma}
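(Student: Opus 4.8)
The plan is to prove the cycle $(1) \Rightarrow (3) \Rightarrow (2) \Rightarrow (1)$. Two facts are used repeatedly. First, \ref{def: raneymor 3} makes $f$ monotone: if $a \le b$ then $f(a) = f(a \wedge b) = f(a) \wedge f(b)$, so $f(a) \le f(b)$. Second, $f(0) = 0$: the interior axiom $\square 0 \le 0$ forces $\square 0 = 0$, so $0 \in \O M$, and the frame morphism $f|_{\O M}$ of \ref{def: raneymor 2} preserves the empty join.

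For $(1) \Rightarrow (3)$ I would use that a Raney morphism restricts to a boolean morphism on $\B \S M$ by \cref{l:raneymorproperties 2}. Given $a \prec b$, pick a witness $c \in \B \S M$ with $a \le c \le b$. From $\neg c \le \neg a$ and monotonicity, $f(\neg c) \le f(\neg a)$, hence $\neg f(\neg a) \le \neg f(\neg c) = f(c)$, the final equality holding because $f$ is boolean on $\B \S M$. Since $f(c) \le f(b)$ and $f(c) \in \B \S M'$, the element $f(c)$ witnesses $\neg f(\neg a) \prec f(b)$.

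For $(3) \Rightarrow (2)$ I would first record that $\prec = \prec_{\B \S M}$ respects binary joins in both arguments: witnesses $p, p' \in \B \S M$ for $u \prec v$ and $u' \prec v'$ yield the witness $p \vee p' \in \B \S M$ for $u \vee u' \prec v \vee v'$. Now, given $a_1 \prec b_1$ and $a_2 \prec b_2$, condition (3) gives $\neg f(\neg a_i) \prec f(b_i)$, and joining produces $\neg f(\neg a_1) \vee \neg f(\neg a_2) \prec f(b_1) \vee f(b_2)$. The key calculation, using De~Morgan together with \ref{def: raneymor 3} and $f(0)=0$, is
\[
f(a_1 \vee a_2) \le \neg f\big(\neg(a_1 \vee a_2)\big) = \neg f\big(\neg a_1 \wedge \neg a_2\big) = \neg f(\neg a_1) \vee \neg f(\neg a_2),
\]
where the inequality is just $f(a_1 \vee a_2) \wedge f(\neg(a_1 \vee a_2)) = f(0) = 0$. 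Axiom (S3) then upgrades this to $f(a_1 \vee a_2) \prec f(b_1) \vee f(b_2)$.

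For $(2) \Rightarrow (1)$, monotonicity gives $f(x) \vee f(y) \le f(x \vee y)$ for $x, y \in \B \S M$. For the reverse inequality, each $x \in \B \S M$ satisfies $x \prec x$, so applying (2) with $a_1 = b_1 = x$ and $a_2 = b_2 = y$ yields $f(x \vee y) \prec f(x) \vee f(y)$, and (S2) gives $f(x \vee y) \le f(x) \vee f(y)$. This is precisely \ref{def: raneymor 4}, so $f$ is a Raney morphism. The main obstacle is the $(3) \Rightarrow (2)$ step: recognizing that the join of the two complemented terms collapses via \ref{def: raneymor 3} to the single complement $\neg f(\neg(a_1 \vee a_2))$, which dominates $f(a_1 \vee a_2)$ exactly because $f(0) = 0$.
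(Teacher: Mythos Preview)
Your proof is correct. The $(2)\Rightarrow(1)$ step is identical to the paper's, but otherwise the two arguments are organized differently. The paper proves only $(1)\Leftrightarrow(2)$ directly and outsources $(2)\Leftrightarrow(3)$ to \cite[Lem.~2.2]{Bezh2012} and \cite[Prop.~7.4]{BezhJohn2014}, so it never writes down the proximity manipulation linking $(2)$ and $(3)$. Its $(1)\Rightarrow(2)$ is also slightly more direct than your route through $(3)$: it simply takes witnesses $s_i\in\B\S M$ for $a_i\prec b_i$, applies \ref{def: raneymor 4} to $s_1\vee s_2$, and invokes \cref{l:raneymorproperties 2} to see the image witness lies in $\B\S M'$. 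Your cycle $(1)\Rightarrow(3)\Rightarrow(2)\Rightarrow(1)$ instead supplies the missing $(3)\Rightarrow(2)$ argument explicitly, via the De~Morgan computation $\neg f(\neg a_1)\vee\neg f(\neg a_2)=\neg f(\neg(a_1\vee a_2))\ge f(a_1\vee a_2)$ from \ref{def: raneymor 3} and $f(0)=0$. The trade-off is clear: the paper's version is shorter but not self-contained, while yours unpacks the cited proximity lemmas in situ and thereby makes the role of \ref{def: raneymor 3} in the equivalence fully visible.
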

			
			\begin{proof}
				It is sufficient to prove that (1)$\Leftrightarrow$(2) since (2)$\Leftrightarrow$(3) follows from \cite[Lem.~2.2]{Bezh2012} and \cite[Prop.~7.4]{BezhJohn2014}.
				
				\begin{enumerate}[labelwidth=2em, align=left]
					\item[(1)$\Rightarrow$(2):]  
					Let $a_1 \prec b_1$ and $a_2 \prec b_2$. Then there are $s_1,s_2 \in \B \S M$ such that $a_1 \le s_1 \le b_1$ and $a_2 \le s_2 \le b_2$. Therefore, $a_1 \vee a_2 \le s_1 \vee s_2 \le b_1 \vee b_2$. By \ref{def: raneymor 3}, $f$ is order preserving. Thus, by (1), 
					\[
					f(a_1\vee a_2) \le f(s_1 \vee s_2) = f(s_1) \vee f(s_2) \le f(b_1) \vee f(b_2).
					\]
					Consequently, $f(a_1 \vee a_2) \prec f(b_1) \vee f(b_2)$ since $f(s_1) \vee  f(s_2)\in \B \S M'$ by \cref{l:raneymorproperties 2}.
					
					\item[(2)$\Rightarrow$(1):] Let $x,y\in \B \S M$. Since $f$ is order preserving, $f(x)\vee f(y)\le f(x\vee y)$. For the reverse inequality,  $x\prec x$ and $y\prec y$. Therefore, by (2), $f(x\vee y) \prec f(x) \vee f(y)$, and hence $f(x\vee y)\le f(x)\vee f(y)$. 
					\qedhere 
				\end{enumerate}
				
			\end{proof}
			\begin{definition}
				For Raney morphisms $f:M_1\to M_2$ and $g:M_2\to M_3$, define $g \star f : M_1 \to M_3$ by  
				$
				(g \star f)(a)=\bigvee \{g f(x) \mid x \in \B\S M_1, \, x \leq a \}.
				$
			\end{definition}
			
			It is immediate from the above definition that if $x \in \B \S M_1$ then $(g \star f)(x)=(g \circ f)(x)$.
			
			\begin{lemma} \label{lem: composition}
				Let $f:M_1\to M_2$, $g:M_2\to M_3$, and $h:M_3\to M_4$ be Raney morphisms. For each $a\in M_1$, we have 
				\[
				((h \star g) \star f)(a) = \bigvee \{h g f(x) \mid x \in \B \S M_1, \, x \leq a \} = (h \star (g \star f))(a).
				\]
			\end{lemma}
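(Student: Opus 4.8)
The plan is to prove the associativity of $\star$ by expanding both outer composites and reducing each to the common middle expression $\bigvee\{h(g(f(x))) \mid x \in \B\S M_1,\ x \le a\}$. The key simplifying observation, already recorded right before the lemma, is that a Raney morphism agrees with the literal function composite on the boolean subalgebra $\B\S M_i$; that is, for $x \in \B\S M_1$ we have $(g \star f)(x) = g(f(x))$. This means I can evaluate the outer $\star$-composites by summing over the appropriate $\B\S$ elements and then replacing the inner $\star$-values by honest composites once I have restricted to $\B\S$ inputs.

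First I would unfold the left-hand composite. By definition,
\[
((h \star g)\star f)(a) = \bigvee\{(h\star g)(f(x)) \mid x \in \B\S M_1,\ x \le a\}.
\]
The obstacle here is that $f(x)$ need not lie in $\B\S M_2$, so I cannot immediately apply the agreement-on-$\B\S$ remark to $(h\star g)(f(x))$. Instead I would expand $(h\star g)(f(x)) = \bigvee\{h(g(y)) \mid y \in \B\S M_2,\ y \le f(x)\}$ and then use property \ref{def: raneymor 5} for $f$ to rewrite $f(x) = \bigvee\{f(z) \mid z \in \B\S M_1,\ z \le x\}$; crucially $f(z) \in \B\S M_2$ by \cref{l:raneymorproperties 2}, so each such $f(z)$ is itself an admissible $y$. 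For the reverse analysis I would start from the right-hand composite $(h \star(g\star f))(a) = \bigvee\{h((g\star f)(y)) \mid y \in \B\S M_1,\ y \le a\}$ and use that $(g\star f)(y) = g(f(y))$ for $y \in \B\S M_1$, giving directly $\bigvee\{h(g(f(y))) \mid y \in \B\S M_1,\ y\le a\}$, which is exactly the middle expression.

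The heart of the argument is therefore showing the left composite also collapses to the middle expression, and this is where I expect the main work. One inequality is easy: since each admissible $x \le a$ with $x \in \B\S M_1$ contributes $h(g(f(x)))$ to the left composite (take $z = x$, $y = f(x)$ in the nested joins), the middle expression is below the left one. For the reverse, I need that every term $h(g(y))$ appearing with $y \in \B\S M_2$, $y \le f(x)$, $x \le a$, is dominated by the middle join. Here I would use monotonicity of $h$ together with the fact that $y \le f(x) \le f(a)$, but more carefully: since $f(x) = \bigvee\{f(z) \mid z \in \B\S M_1,\ z \le x\}$ and $g$, $h$ are order-preserving (from \ref{def: raneymor 3}), I can push the join outward. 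The delicate point is handling the interaction of the two nested suprema and confirming that finite joins suffice or that monotonicity alone lets each $h(g(y))$ be absorbed; I would lean on \ref{def: raneymor 4} (join-preservation on $\B\S$) for $f$, $g$, and $h$ to turn $h(g(f(z)))$ with $z \le x \le a$ into terms of the middle join, establishing $h(g(y)) \le \bigvee\{h(g(f(w))) \mid w \in \B\S M_1,\ w \le a\}$ and hence the matching reverse inequality.

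In summary, both outer composites are shown to equal $\bigvee\{h(g(f(x))) \mid x \in \B\S M_1,\ x \le a\}$ by unfolding definitions, invoking the $\B\S$-agreement remark and \cref{l:raneymorproperties 2} to keep intermediate values inside the boolean subalgebras, and using property \ref{def: raneymor 5} to resolve the nested joins. The main obstacle is the left composite, because the inner value $f(x)$ escapes $\B\S M_2$ and forces a two-layer join manipulation; the right composite is comparatively immediate. I would be careful to verify that the order-preservation and $\B\S$-join-preservation properties are exactly what license interchanging the nested suprema.
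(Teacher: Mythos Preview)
Your proposal contains a self-inflicted obstacle. You write that ``$f(x)$ need not lie in $\B\S M_2$'', and this is precisely what sends you into the two-layer join manipulation. But it is false: in the outer unfolding you have $x \in \B\S M_1$, and \cref{l:raneymorproperties 2} (which you yourself invoke two lines later for $f(z)$) gives $f(x) \in \B\S M_2$ immediately. You even use this implicitly when you write ``take $z=x$, $y=f(x)$ in the nested joins'' to get the easy inequality---that step is only legitimate if $f(x)$ is an admissible $y$, i.e.\ if $f(x)\in\B\S M_2$. So your argument is internally inconsistent: you deny the fact, then rely on it.

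Once you accept $f(x)\in\B\S M_2$, the nested expansion is unnecessary and the left composite collapses in one step: $(h\star g)(f(x)) = (h\circ g)(f(x)) = h(g(f(x)))$ by the agreement-on-$\B\S$ remark. This is exactly the paper's proof, which is a straight five-line chain of equalities using only that $x\in\B\S M_1 \Rightarrow f(x)\in\B\S M_2$ and the remark that $\star$ coincides with $\circ$ on $\B\S$-inputs. Your handling of the right composite is correct and matches the paper; the difficulty you anticipate on the left is illusory, and the appeals to \ref{def: raneymor 4} and \ref{def: raneymor 5} you sketch for it are not needed.
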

			\begin{proof}
				Let $a \in M_1$. Then
				\begin{align*}
					((h \star g) \star f)(a) 
					&= \bigvee \{(h\star g)(f(x)) \mid x \in \B \S M_1, \, x \leq a \}\\
					&= \bigvee \{(h \circ g)(f(x)) \mid x \in \B \S M_1, \, x \leq a \} & &\text{since $f(x) \in \B \S M_2$}\\
					&= \bigvee \{h((g \circ f)(x)) \mid x \in \B \S M_1, \, x \leq a \}\\
					&= \bigvee \{h((g \star f)(x)) \mid x \in \B \S M_1, \, x \leq a \}&&\text{since $x\in \B \S M_1$}\\
					&= (h \star (g \star f))(a). && \qedhere
				\end{align*}
			\end{proof}
			
			\begin{definition}
				For an MT-algebra $M$, define $id_M:M\to M$ by
				\[
				id_M(a) = \bigvee \{x \in \B \S M \mid x \leq a\} \ 
				\mbox{ for each }a\in M. \]
			\end{definition}
			
			The next lemma will be used multiple times to prove that certain morphisms preserve finite meets. 
			
			\begin{lemma}\label{l: preservation of all finite meets}
				Let $L$ be a lattice, $L'$ a frame, and $f : L \to L'$ an order preserving map. If  $S\subseteq L$ is closed under binary meets, $f$ preserves all binary meets from $S$, and 
				\[
				f(a)=\bigvee \{f(s)\mid s\in S, \, s\leq a\}
				\]
				for all $a\in L$, then $f$ preserves all binary meets from $L$.
			\end{lemma}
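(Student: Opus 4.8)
The plan is to prove the two inequalities $f(a\wedge b)\le f(a)\wedge f(b)$ and $f(a)\wedge f(b)\le f(a\wedge b)$ separately. The first is immediate from monotonicity: since $a\wedge b\le a$ and $a\wedge b\le b$ and $f$ is order preserving, we get $f(a\wedge b)\le f(a)$ and $f(a\wedge b)\le f(b)$, hence $f(a\wedge b)\le f(a)\wedge f(b)$. This direction uses none of the hypotheses on $S$.

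For the reverse inequality, I would expand both factors using the join-density hypothesis,
\[
f(a)=\bigvee\{f(s)\mid s\in S,\ s\le a\}, \qquad f(b)=\bigvee\{f(t)\mid t\in S,\ t\le b\},
\]
and then use that $L'$ is a frame to distribute the binary meet over these (possibly infinite) joins:
\[
f(a)\wedge f(b)=\bigvee\{f(s)\wedge f(t)\mid s,t\in S,\ s\le a,\ t\le b\}.
\]

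Now for each pair $s,t\in S$ with $s\le a$ and $t\le b$, closure of $S$ under binary meets gives $s\wedge t\in S$, and the hypothesis that $f$ preserves binary meets from $S$ gives $f(s)\wedge f(t)=f(s\wedge t)$. Since $s\le a$ and $t\le b$, we have $s\wedge t\le a\wedge b$, so $f(s\wedge t)$ appears among the terms of the join $\bigvee\{f(r)\mid r\in S,\ r\le a\wedge b\}=f(a\wedge b)$, whence $f(s)\wedge f(t)\le f(a\wedge b)$. Taking the join over all such pairs yields $f(a)\wedge f(b)\le f(a\wedge b)$, and combining the two inequalities finishes the proof.

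The only real content is the distributivity step, which is precisely where the assumption that $L'$ is a frame enters; everything else is bookkeeping with order-preservation together with the closure and meet-preservation hypotheses on $S$. Accordingly, the step deserving the most care is verifying that frame distributivity applies to the full joins rather than merely finite ones, since the index sets $\{s\in S\mid s\le a\}$ and $\{t\in S\mid t\le b\}$ may be infinite; I expect this to be the crux, and the rest to be routine.
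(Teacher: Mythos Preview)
Your proof is correct and follows essentially the same route as the paper's: expand $f(a)$ and $f(b)$ via the join-density hypothesis, distribute the meet over the joins using the frame law in $L'$, then use closure of $S$ under meets and preservation of meets from $S$ to identify each term with one bounded by $f(a\wedge b)$. The only cosmetic difference is that the paper presents this as a single chain of equalities $f(a)\wedge f(b)=\cdots=f(a\wedge b)$ rather than splitting into two inequalities, absorbing your monotonicity step into the reverse inclusion of the index sets.
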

			\begin{proof}
				For $a,b\in L$ we have
				\begin{align*}
					f(a) \wedge f(b) &= \bigvee \{f(x) \mid x \in S, \, x \leq a \} \wedge \bigvee \{f(y) \mid y \in S, \, y \leq b \}\\
					&= \bigvee \{f(x) \wedge f(y) \mid x , y \in S, \, x \leq a, \, y \leq b \} && \text{$L'$ is a frame}\\ 
					&= \bigvee \{f(x \wedge y) \mid x,y \in S, \, x \leq a, \, y \leq b \} && \text{$f$ preserves binary meets from $S$}\\
					&= \bigvee \{f(z) \mid z \in S, \, z \leq a\wedge b \} && \text{$S$ is closed under binary meets}\\
					&= f(a \wedge b). && \qedhere 
				\end{align*}   
			\end{proof}
			
			\begin{lemma}\ \label{lem: identity}
				\begin{enumerate}[ref=\thelemma(\arabic*)]
					\item $id_M$  is a Raney morphism for each MT-algebra $M$. \label{1M}
					\item For each Raney morphism $f:M\to M'$ between MT-algebras,
					\[
					id_{M'} \star f = f = f \star id_M.
					\]
				\end{enumerate}
			\end{lemma}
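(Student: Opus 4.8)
The plan is to exploit a single observation: the map $id_M$ fixes every element of $\B\S M$. Indeed, for $a \in \B\S M$ the set $\{x \in \B\S M \mid x \le a\}$ contains $a$ itself and has all its members below $a$, so its join is exactly $a$; hence $id_M|_{\B\S M} = \id$. Since $\O M \subseteq \S M \subseteq \B\S M$ and $id_M$ restricts to the identity on each, this immediately yields \ref{def: raneymor 1} and \ref{def: raneymor 2} (the restrictions are literal identity maps, hence coframe resp.\ frame morphisms), and also \ref{def: raneymor 4} (for $x,y \in \B\S M$ we have $x \vee y \in \B\S M$, so both sides equal $x \vee y$). Condition \ref{def: raneymor 5} is immediate from the definition of $id_M$ together with $id_M(x) = x$ for $x \in \B\S M$.

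The only condition requiring genuine work is \ref{def: raneymor 3}, and I would derive it from \cref{l: preservation of all finite meets} applied with $L = L' = M$ (a complete boolean algebra, hence a frame), $S = \B\S M$, and $f = id_M$. The map $id_M$ is order preserving, since $a \le b$ merely enlarges the indexing set of the defining join; $\B\S M$ is closed under binary meets as a boolean subalgebra, and $id_M$ preserves these meets because it is the identity on $\B\S M$; finally the hypothesis $id_M(a) = \bigvee\{id_M(s) \mid s \in S,\ s \le a\}$ is precisely \ref{def: raneymor 5}. The lemma then gives preservation of all binary meets in $M$, i.e.\ \ref{def: raneymor 3}, completing the verification that $id_M$ is a Raney morphism.

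For part (2) I would unfold the definition of $\star$ and use the two facts just isolated. For $f \star id_M$, since $id_M(x) = x$ for $x \in \B\S M$, the defining join reduces to $\bigvee\{f(x) \mid x \in \B\S M,\ x \le a\}$, which equals $f(a)$ by \ref{def: raneymor 5}. For $id_{M'} \star f$, note that $f$ maps $\B\S M$ into $\B\S M'$ by \cref{l:raneymorproperties 2}, so $id_{M'}(f(x)) = f(x)$ for every $x \in \B\S M$; the defining join again collapses to $\bigvee\{f(x) \mid x \in \B\S M,\ x \le a\} = f(a)$. Hence $id_{M'} \star f = f = f \star id_M$.

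The main obstacle is really just condition \ref{def: raneymor 3}: unlike the other four conditions it cannot be read off from $id_M|_{\B\S M} = \id$ alone, because $id_M$ is not the identity on all of $M$. Recognizing that it falls exactly under the hypotheses of \cref{l: preservation of all finite meets} is the crux, and the rest of the argument is bookkeeping.
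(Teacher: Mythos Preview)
Your proof is correct and, apart from one step, follows the same line as the paper's argument. The one difference is in verifying \ref{def: raneymor 4}: you observe directly that $\B\S M$ is closed under binary joins and $id_M$ is the identity there, so $id_M(x\vee y)=x\vee y=id_M(x)\vee id_M(y)$. The paper instead verifies condition (3) of \cref{lem: eqv morphisms} (that $a\prec b$ implies $\neg id_M(\neg a)\prec id_M(b)$) and then invokes that lemma to conclude \ref{def: raneymor 4}. Your direct route is shorter and avoids the detour through the proximity relation; the paper's route has the minor advantage of exercising \cref{lem: eqv morphisms}, but your argument is equally valid and arguably cleaner here. Part (2) is identical in substance to the paper's proof.
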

			\begin{proof}
				(1) We have that $id_M(x) = x$ for each $x \in \B \S M$. Therefore, \ref{def: raneymor 1}, \ref{def: raneymor 2}, and \ref{def: raneymor 5} hold. 
				Since $id_M$ is identity on $\B \S M$, which is closed under binary meets, \cref{l: preservation of all finite meets} applies, yielding that \ref{def: raneymor 3} holds. We show that \cref{lem: eqv morphisms 3} holds. Let $a\prec b$, in particular let $x\in \B\S M$ be such that $a\leq x\leq b$. Since $id_M$ is monotone and $\neg$ is antitone, $\neg id_M(\neg a)\leq \neg id_M(\neg x)$. Because $x\in \B \S M$, $x\leq id_M(b)$. Since also $\neg x\in \B \S M$, $id_M(\neg x)=\neg x$, and so $\neg id_M(\neg x)=x$. This means that $\neg id_M(\neg a)\leq x\leq id_M(b)$, that is, $\neg id_M(\neg a)\prec id_M(b)$. Thus, $id_M$ is a Raney morphism by \cref{lem: eqv morphisms}.
				
				(2) Let $a\in M$. Then
				\begin{align*}
					(id_{M'} \star f)(a) &= \bigvee \{id_{M'} f(x)\mid x\in \B \S M, \, x\leq a\}\\
					&= \bigvee \{ f(x) \mid x\in  \B \S M, \, x\leq a\} & &\text{since $f(x) \in  \B \S M'$}\\
					&= f(a) \\
					&= \bigvee \{f \, id_M(x) \mid x\in \B \S M, \, x\leq a\} & &\text{since $x \in  \B \S M$}\\
					&=(f \star id_M)(a). && \qedhere
				\end{align*} 
				
			\end{proof}
			
			\begin{theorem} \label{composition is proximity mt-morphism}
				The MT-algebras and Raney morphisms form a category, $\RMT$, where composition is given by $\star$ and identity morphisms are $id_M$.
			\end{theorem}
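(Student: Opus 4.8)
The plan is to verify the three defining properties of a category: that $\star$ is a well-defined operation on Raney morphisms, that it is associative, and that the maps $id_M$ serve as identities. The last two are already in hand: associativity is exactly \cref{lem: composition}, and the unit laws together with the fact that each $id_M$ is a Raney morphism constitute \cref{lem: identity}. So the only thing left to prove is closure under composition, namely that for Raney morphisms $f : M_1 \to M_2$ and $g : M_2 \to M_3$, the map $g \star f$ satisfies \ref{def: raneymor 1}--\ref{def: raneymor 5}. Throughout I would lean on the observation recorded just after the definition of $\star$, that $(g \star f)(x) = (g \circ f)(x)$ whenever $x \in \B\S M_1$, since this lets me compute $g \star f$ on $\B\S M_1$ by honest composition.

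I would treat the axioms in an order that front-loads the easy ones. Property \ref{def: raneymor 5} holds essentially by unwinding the definition of $\star$: for $x \in \B\S M_1$ the value $(g \star f)(x)$ coincides with $g(f(x))$, so the join appearing in \ref{def: raneymor 5} reproduces the defining join of $(g\star f)(a)$. For \ref{def: raneymor 1} and \ref{def: raneymor 2}, I would note that $\S M_1$ and $\O M_1$ are both contained in $\B\S M_1$, so on these subsets $g \star f$ restricts to the honest composites $g|_{\S M_2} \circ f|_{\S M_1}$ and $g|_{\O M_2} \circ f|_{\O M_1}$, which are composites of coframe (resp.\ frame) morphisms and hence again coframe (resp.\ frame) morphisms. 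For \ref{def: raneymor 4}, given $x, y \in \B\S M_1$ I would use $x \vee y \in \B\S M_1$ to reduce to $g(f(x \vee y))$, then apply \ref{def: raneymor 4} for $f$ (noting $f(x), f(y) \in \B\S M_2$ by \cref{l:raneymorproperties 2}) followed by \ref{def: raneymor 4} for $g$.

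The main obstacle is \ref{def: raneymor 3}, preservation of all binary meets, and this is precisely what \cref{l: preservation of all finite meets} is designed to dispatch. I would apply that lemma with $L = M_1$, with $L' = M_3$ (a complete boolean algebra, hence a frame), with the map $g \star f$, and with $S = \B\S M_1$, which is closed under binary meets. The three hypotheses to check are: that $g \star f$ is order preserving, which is clear since enlarging $a$ only enlarges the down-set indexing the defining join; that $g \star f$ preserves binary meets from $\B\S M_1$, which holds because on $\B\S M_1$ it equals $g \circ f$ and both $f|_{\B\S M_1}$ and $g|_{\B\S M_2}$ are boolean morphisms by \cref{l:raneymorproperties 2}; and the join formula $f(a)=\bigvee\{f(s)\mid s\in S,\ s\leq a\}$, which is exactly the instance of \ref{def: raneymor 5} just established. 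The lemma then yields \ref{def: raneymor 3}, completing the verification that $g \star f$ is a Raney morphism. Combined with \cref{lem: composition} and \cref{lem: identity}, this shows that $\RMT$ is a category.
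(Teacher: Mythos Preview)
Your proposal is correct and follows essentially the same approach as the paper: reduce to closure under $\star$ via \cref{lem: composition} and \cref{lem: identity}, then verify \ref{def: raneymor 1}--\ref{def: raneymor 5} for $g\star f$ using that $g\star f$ agrees with $g\circ f$ on $\B\S M_1$ and invoking \cref{l: preservation of all finite meets} for \ref{def: raneymor 3}. Your treatment of \ref{def: raneymor 4} is in fact slightly more direct than the paper's, since you use $x\vee y\in\B\S M_1$ to get $(g\star f)(x\vee y)=g(f(x\vee y))$ immediately rather than bounding it via $(g\star f)(a)\le (g\circ f)(a)$.
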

			
			\begin{proof}
				In view of \cref{lem: composition,lem: identity}, it suffices to check that if $f:M_1\to M_2$ and ${g:M_2\to M_3}$ are Raney morphisms, then so is $g \star f : M_1 \to M_3$. For this we verify that $g \star f$ satisfies \ref{def: raneymor 1}--\ref{def: raneymor 5}.
				
				
				\begin{enumerate}[label=\upshape(R\arabic*)]
					\item  For $s \in \S M_1$, we have $(g \star f)(s)= (g \circ f)(s)$. Thus, $(g \star f) | _{\S M_1}$ is a coframe morphism.
					\item For $u \in \O M_1$, we have $(g \star f)(u)= (g \circ f)(u)$. Thus, $(g \star f) | _{\O M_1}$ is a frame morphism.
					\item Since  $(g\star f)(x)=gf(x)$  for each $x\in \B \S M_1$,  
					for all $a\in M_1$ we have \[(g\star f)(a)=\bigvee \{(g\star f)(x)\mid x\in \B \S M_1, \, x\leq a\}.\] Thus, \cref{l: preservation of all finite meets} applies, by which $g\star f$ preserves binary meets.
					\item Let $x,y \in \B \S M_1$. By \ref{def: raneymor 3}, $g \star f$ is order preserving. Thus, 
					\[
					(g \star f)(x) \vee (g \star f)(y) \le (g \star f)(x \vee y).
					\]
					For the reverse inequality, since $(g \star f)(a)\le (g\circ f)(a)$ for each $a \in M_1$ and $f,g$ are Raney morphisms, 
					\begin{align*}
						(g \star f)(x \vee y) \le (g \circ f)(x \vee y) &= g(f(x) \vee f(y)) & &\text{since $x,y \in \B\S M_1$} \\ &= g f(x) \vee g f(y) & &\text{since $f(x),f(y) \in \B\S M_2$} \\ &= (g \star f)(x) \vee (g \star f)(y) & &\text{since $x,y \in \B\S M_1$}.
					\end{align*}
					\item For $a\in M_1$, we have 
					\begin{align*}
						(g\star f) (a) &=\bigvee \{g f(x) \mid x\in \B \S M_1, \, x\leq a\}\\
						&=\bigvee \{ (g \star f)(x)\mid x\in \B \S M_1, \, x\leq a\} &&\text{since $x\in \B \S M_1$}. && \qedhere
					\end{align*} 
				\end{enumerate}
			\end{proof}
			
			
			
			
			\begin{proposition}
				$\R : \RMT \to \Raney$ is a functor. 
			\end{proposition}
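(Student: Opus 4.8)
The plan is to define $\R$ on objects by $\R M = (\S M, \O M)$, which is a Raney extension by \cite[Prop.~5.2]{GSRA25}, and on a Raney morphism $f : M \to M'$ by the restriction $\R f := f|_{\S M} \colon \S M \to \S M'$. Two things then need to be checked: that $\R f$ is a well-defined morphism in $\Raney$, and that $\R$ preserves identities and the composition $\star$.

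First I would verify that $\R f$ is a $\Raney$-morphism. By \cref{Raneydef}, a morphism $(\S M, \O M) \to (\S M', \O M')$ is a coframe morphism $\S M \to \S M'$ whose restriction to $\O M$ is a well-defined frame morphism into $\O M'$. Condition \ref{def: raneymor 1} says exactly that $f|_{\S M}$ is a coframe morphism, so $\R f$ is well defined and preserves the coframe structure. For the second requirement I would use that every open element is saturated, so $\O M \subseteq \S M$ and the restriction of $\R f$ to $\O M$ is simply $f|_{\O M}$, which is a frame morphism into $\O M'$ by \ref{def: raneymor 2}. Hence $\R f$ lies in $\Raney(\R M, \R M')$.

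Next I would establish functoriality. For identities, $id_M$ acts as the identity on $\B \S M$, hence on $\S M$, so $\R(id_M) = id_M|_{\S M}$ is the identity map of $\S M$, that is, $id_{\R M}$. For composition, the single nonroutine observation is that the $\star$-composition in $\RMT$ is not ordinary function composition, yet it agrees with $g \circ f$ on $\B \S M_1$: the remark following the definition of $\star$ gives $(g \star f)(x) = (g \circ f)(x)$ for all $x \in \B \S M_1$. Since $\S M_1 \subseteq \B \S M_1$, for every $s \in \S M_1$ we get $(g \star f)(s) = g(f(s))$. On the other side, $(\R g \circ \R f)(s) = g|_{\S M_2}(f|_{\S M_1}(s)) = g(f(s))$, using \ref{def: raneymor 1} to ensure $f(s) \in \S M_2$. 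Therefore $\R(g \star f) = \R g \circ \R f$, where the right-hand side is the ordinary composition of coframe morphisms by which morphisms compose in $\Raney$.

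The main obstacle is conceptual rather than computational: one must recognize that passing to $\S M$ collapses the twisted composition $\star$ back to ordinary composition, so that $\R$ is functorial even though it does not intertwine $\star$ with $\circ$ on the whole of $M$. Once the identity $(g \star f)|_{\B \S M_1} = (g \circ f)|_{\B \S M_1}$ is recorded, the remaining verifications are just unwinding definitions and invoking \ref{def: raneymor 1}, \ref{def: raneymor 2}, and the inclusion $\O M \subseteq \S M$.
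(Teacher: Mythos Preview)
Your proposal is correct and follows essentially the same approach as the paper: define $\R$ on morphisms by restriction to $\S M$, verify well-definedness via \ref{def: raneymor 1} and \ref{def: raneymor 2}, and observe that $\star$ coincides with $\circ$ on $\B\S M_1 \supseteq \S M_1$ so that $\R$ preserves identities and composition. The paper's proof is slightly terser but makes exactly the same moves.
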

			
			\begin{proof}
				As we pointed out in the previous section, $\R M = (\S M,\O M)$ is a Raney extension for each MT-algebra $M$. Thus, $\R$ is 
				well defined on objects. 
				To see that it is well defined on morphisms, observe that if $f:M\to M'$ is a Raney morphism, then $f| _{\S M}:\S M\to\S M'$ is a coframe morphism by \ref{def: raneymor 1} and $f| _{\O M}:\O M\to\O M'$ is a frame morphism by \ref{def: raneymor 2}. Since 
				the restriction of $id_M$ is the identity on $\S M$, we have $\R(id_M) = 1_{\R M}$. 
				Let $f:M_1 \to M_2$ and $g:M_2 \to M_3$ be Raney morphisms. Since the restriction of $g \star f$ to $\S M_1$ is set-theoretic composition, 
				\[
				\R(g \star f)=
				(g \circ f)|_{\S M_1 }=g|_{\S M_2} \circ f|_{\S M_1}=\R(g) \circ \R(f).
				\]
				Thus, $\R : \RMT \to \Raney$ is a functor. 
			\end{proof}
			
			
			We conclude this section by connecting Raney morphisms with proximity morphisms  (see  \cref{def: proximitymor}).
			
			\begin{lemma}\label{welldefined from R to P}
				Let $f: M \to M'$ be a Raney morphism between MT-algebras. Define $\widehat f: M \to M'$ by 
				$
				\widehat f(a)= \bigvee \{f(x) \mid x \in \LC M, \, x \leq a\}.
				$
				Then $\widehat{f}$ is a proximity morphism.
			\end{lemma}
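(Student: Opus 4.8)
The plan is to verify the four conditions \ref{def: proximitymor 1}--\ref{def: proximitymor 4} of \cref{def: proximitymor} for $\widehat f$. The linchpin is that $\widehat f$ agrees with $f$ on locally closed elements: since $f$ is order preserving by \ref{def: raneymor 3}, for $x \in \LC M$ every $y \in \LC M$ with $y \le x$ satisfies $f(y) \le f(x)$, so together with $x \le x$ we get $\widehat f(x) = \bigvee \{ f(y) \mid y \in \LC M,\ y \le x \} = f(x)$. I would record this first, along with three structural facts about $\LC M$: (i) $\O M \subseteq \LC M$, writing $u = u \wedge 1$ and noting $1$ is closed; (ii) $\C M$, and hence $\LC M$, is closed under binary meets, since a closed element is the complement of an open one and opens are closed under finite joins; and (iii) $\LC M \subseteq \B\S M$, since opens are saturated, closeds are complements of saturated elements, and $\B\S M$ is a subalgebra. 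I would also note that $\widehat f$ is order preserving directly from its definition, and that $f(0)=0$ by \ref{def: raneymor 2} (as $0 \in \O M$).

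Conditions \ref{def: proximitymor 1} and \ref{def: proximitymor 4} are then immediate. For \ref{def: proximitymor 1}, fact (i) gives $\widehat f(u) = f(u)$ for $u \in \O M$, so $\widehat f|_{\O M} = f|_{\O M}$, which is a frame morphism by \ref{def: raneymor 2}. For \ref{def: proximitymor 4}, since $\widehat f(x) = f(x)$ for every $x \in \LC M$, the defining formula for $\widehat f(a)$ reads $\widehat f(a) = \bigvee \{ \widehat f(x) \mid x \in \LC M,\ x \le a \}$, which is exactly \ref{def: proximitymor 4}.

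For \ref{def: proximitymor 2} I would invoke \cref{l: preservation of all finite meets} with $L = M$, $L' = M'$ (a complete boolean algebra, hence a frame), and $S = \LC M$. The hypotheses hold: $\widehat f$ is order preserving; $\LC M$ is closed under binary meets by fact (ii); $\widehat f$ preserves binary meets from $\LC M$ because there it coincides with $f$ (for $x,y \in \LC M$ we have $x \wedge y \in \LC M$, so $\widehat f(x \wedge y) = f(x \wedge y) = f(x) \wedge f(y) = \widehat f(x) \wedge \widehat f(y)$ using \ref{def: raneymor 3}); and the remaining hypothesis is precisely the instance of \ref{def: proximitymor 4} just established. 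The lemma then yields $\widehat f(a \wedge b) = \widehat f(a) \wedge \widehat f(b)$ for all $a,b \in M$.

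The main work is \ref{def: proximitymor 3}, preservation of finite joins of locally closed elements, and this is where I expect the only real difficulty. Fix a finite $S \subseteq \LC M$. The inequality $\bigvee_{s \in S} f(s) \le \widehat f(\bigvee S)$ is clear, as each $s \in \LC M$ lies below $\bigvee S$. For the reverse inequality it suffices to show $f(z) \le \bigvee_{s \in S} f(s)$ for every $z \in \LC M$ with $z \le \bigvee S$. The crux is to exploit that $M$ is boolean: by finite distributivity $z = z \wedge \bigvee S = \bigvee_{s \in S} (z \wedge s)$. Each $z \wedge s$ lies in $\LC M \subseteq \B\S M$ by facts (ii)--(iii), so applying \ref{def: raneymor 4} (extended from binary joins to finite joins of $\B\S M$-elements, $\B\S M$ being closed under finite joins) together with order preservation gives $f(z) = \bigvee_{s \in S} f(z \wedge s) \le \bigvee_{s \in S} f(s)$. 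Taking the join over all such $z$ yields $\widehat f(\bigvee S) \le \bigvee_{s \in S} f(s)$, and since $\widehat f(s) = f(s)$ this establishes \ref{def: proximitymor 3}; the empty case reduces to $\widehat f(0) = f(0) = 0$. The distributivity step, which decomposes an arbitrary locally closed $z$ below $\bigvee S$ into pieces to which \ref{def: raneymor 4} applies, is the heart of the argument.
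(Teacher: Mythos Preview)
Your proof is correct and follows the same overall structure as the paper's: verify \ref{def: proximitymor 1}--\ref{def: proximitymor 4}, with \ref{def: proximitymor 2} reduced to \cref{l: preservation of all finite meets} applied with $S = \LC M$. The only noticeable difference is in \ref{def: proximitymor 3}. You fix an arbitrary $z \in \LC M$ below $\bigvee S$, decompose it as $z = \bigvee_{s\in S}(z \wedge s)$ via distributivity, and apply \ref{def: raneymor 4} to this finite join in $\B\S M$. The paper instead observes directly that $\bigvee S \in \B\S M$ (since $\LC M \subseteq \B\S M$ and $\B\S M$ is closed under finite joins), so that $\widehat f(\bigvee S) \le f(\bigvee S) = \bigvee_{s\in S} f(s)$ by a single application of \ref{def: raneymor 4} to $\bigvee S$ itself. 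Both routes are short and equally valid; the paper's avoids the element-by-element decomposition by exploiting that the join already lives in $\B\S M$, while yours makes the distributive structure more explicit.
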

			\begin{proof}
				We show that $\widehat f$ satisfies \ref{def: proximitymor 1}--\ref{def: proximitymor 4}.
				
				\begin{enumerate}[label=\upshape(P\arabic*)]
					\item Since $\O M\subseteq \LC M$, we have 
					$\widehat{f}|_{\O M }= f|_{\O M }$ 
					and it is enough to apply  \ref{def: raneymor 2}. 
					\item Since $\widehat f(x)=f(x)$ for all $x\in \LC M$, for all $a \in M$, \[\widehat f(a)=\bigvee \{\widehat f(x)\mid x\in \LC M,x\leq a\}.\] Moreover, $\LC M$ is closed 
					under 
					binary meets and $\widehat f$ preserves binary meets from $\LC M$ (because $f$ does). Thus, \cref{l: preservation of all finite meets} applies, by which  $\widehat f$ preserves all binary meets.
					\item  Let $S \subseteq \LC M$ be finite.
					Since $\widehat f$ is order preserving by \ref{def: proximitymor 2}, it suffices to show that $\widehat f\left(\bigvee S\right) \leq \bigvee \left\{\widehat f(s) \mid s \in S \right\}$. We have
					\begin{align*}
						\widehat f\left(\bigvee S\right) &=\bigvee \left\{f(x) \mid x \in \LC M, \, x \leq \bigvee S\right\}\\
						& \leq \bigvee \left\{f(x) \mid x \in \B \S M, \, x \leq \bigvee S\right\}  && \LC M \subseteq \B \S M \\ &= f\left(\bigvee S\right) && \bigvee S \in \B\S M \\
						&=\bigvee \{f(s) \mid s \in S\} && \text {\ref{def: raneymor 4}}\\
						&= \bigvee \{\widehat f(s) \mid s \in S\} && \widehat f(x)=f(x) \text{ for each } x \in \LC M.
					\end{align*} 
					\item Using again that $\widehat f(x)=f(x)$ for each $x \in \LC M$, \ref{def: proximitymor 4} is immediate from the definition of $\widehat f$. \qedhere
				\end{enumerate}
			\end{proof}
			
			\begin{lemma}\label{composition from R to P}
				$ $
				\begin{enumerate}
					[ref=\thelemma(\arabic*)]
					\item  If $f:M_1 \to M_2$ and $g:M_2 \to M_3$ are Raney morphisms, then $\widehat{g \star f}=\widehat g \star \widehat f$.
					\item If $id_M : M \to M$ is an identity morphism in $\RMT$, then $\widehat{id_M} : M\to M$ is an identity morphism in $\PMT$.
				\end{enumerate}
				
			\end{lemma}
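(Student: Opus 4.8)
The plan is to establish that the assignment $f \mapsto \widehat f$ of \cref{welldefined from R to P} respects composition and identities, by reducing each of the two composites in part (1) to a single common expression. Three facts from the preceding material will carry the argument: first, $\LC M \subseteq \B\S M$ for every MT-algebra $M$ (as already used in the proof of \cref{welldefined from R to P}); second, the identity $(g \star f)(x) = g(f(x))$, valid for all $x \in \B\S M_1$; and third, that $\widehat f$ agrees with $f$ on $\LC M$, since $f$ is order preserving and hence $\widehat f(x) = \bigvee\{f(y) \mid y \in \LC M,\ y \le x\} = f(x)$ whenever $x \in \LC M$.

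For part (1), I would fix $a \in M_1$ and compute both sides. On the left, $\widehat{g \star f}(a) = \bigvee\{(g \star f)(x) \mid x \in \LC M_1,\ x \le a\}$; since each index $x$ lies in $\B\S M_1$ by the first fact, the second fact gives $(g \star f)(x) = g(f(x))$, so that $\widehat{g \star f}(a) = \bigvee\{g(f(x)) \mid x \in \LC M_1,\ x \le a\}$. On the right, the composition in $\PMT$ is $(\widehat g \ast \widehat f)(a) = \bigvee\{\widehat g(\widehat f(x)) \mid x \in \LC M_1,\ x \le a\}$. For $x \in \LC M_1$ the third fact yields $\widehat f(x) = f(x)$, while \cref{l:raneymorproperties3} yields $f(x) \in \LC M_2$; hence $\widehat g(\widehat f(x)) = \widehat g(f(x)) = g(f(x))$, the last equality being the third fact applied to $g$. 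Thus the right-hand side also equals $\bigvee\{g(f(x)) \mid x \in \LC M_1,\ x \le a\}$, and the two composites coincide.

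For part (2), I would expand $\widehat{id_M}(a) = \bigvee\{id_M(x) \mid x \in \LC M,\ x \le a\}$; since $\LC M \subseteq \B\S M$ and $id_M$ fixes every element of $\B\S M$ (as noted in the proof of \cref{lem: identity}), each $id_M(x) = x$, so $\widehat{id_M}(a) = \bigvee\{x \in \LC M \mid x \le a\}$, which is precisely the $\PMT$-identity $id^P_M$. I expect no real obstacle here: the whole argument is a matter of unwinding the two composition formulas and matching their index sets against each other. The single step needing an external input is the collapse $\widehat g(f(x)) = g(f(x))$, which depends on $f$ mapping locally closed elements to locally closed elements, and this is exactly \cref{l:raneymorproperties3}.
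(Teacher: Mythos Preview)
Your proof is correct and follows essentially the same approach as the paper's: both arguments reduce each side of (1) to the common expression $\bigvee\{g(f(x)) \mid x \in \LC M_1,\ x \le a\}$ using exactly the same three ingredients ($\LC M \subseteq \B\S M$, the identity $(g\star f)(x)=g(f(x))$ on $\B\S M_1$, and \cref{l:raneymorproperties3} to collapse $\widehat g(f(x))$ to $g(f(x))$), and (2) is handled identically via $id_M(x)=x$ on $\LC M$.
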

			\begin{proof}
				(1)  For $a \in M_1$, we have 
				\begin{align*}
					\left(\widehat{g} \star \widehat{f}\right)(a) &= \bigvee \{\widehat{g} \widehat{f}(x)  \mid  x \in \LC M_1, \, x \leq a \} \\
					&=\bigvee \{\widehat g f(x)  \mid x \in \LC M_1, \, x \leq a \} && \widehat f(x)=f(x) \text{ for each } x \in \LC M_1\\
					&=\bigvee \{g f(x)  \mid x \in \LC M_1, \, x \leq a \} && \widehat g f(x) =g f(x)  \text{ 
						by \cref{l:raneymorproperties3}} \\
					&=\bigvee \{(g \star f)(x) \mid x \in \LC M_1, \, x \leq a \} && (g \star f)(x)=g f(x)  \text{ since } x \in 
					\B \S M_1 \\
					&=\left(\widehat{g \star f}\right)(a).
				\end{align*} 
				
				(2) Since $id_M(x)=x$ for $x \in \LC M$, for each $a \in M$, we get 
				\[
				\widehat{id_M}(a)=\bigvee \{id_M(x) \mid x \in \LC M, \, x \leq a \}= \bigvee \{x \mid x \in \LC M, \, x \leq a \}= id^P_M(a). \qedhere
				\]

			\end{proof}
			
			Define $\mathcal{I}:\RMT \to \PMT$ 
			by setting $\mathcal{I} M=M$ for each MT-algebra $M$ and $\mathcal{I} f=\widehat f$ for each morphism $f$ in $\RMT$. It follows from \cref{welldefined from R to P,composition from R to P} that $\mathcal{I}$ is a functor. Let $\mathcal U:\Raney \to \Frm$ be the forgetful functor given by $\mathcal U R = L$ for each Raney extension $R=(C,L)$ and $\mathcal U f = f|_L$ for each $\Raney$-morphism $f$.
			
			\begin{theorem}
				The following diagram commutes.
				\[
				\begin{tikzcd}
					\RMT \ar[r, "\mathcal I"] \ar[d,"\R"'] & \PMT \ar[d, "\O"]\\
					\Raney \ar[r, "\mathcal U"] & \Frm
				\end{tikzcd}
				\]
			\end{theorem}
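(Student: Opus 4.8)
The plan is to verify that the two composite functors $\O \circ \mathcal{I}$ and $\mathcal{U} \circ \R$ from $\RMT$ to $\Frm$ agree on objects and on morphisms; since each of the four arrows has already been shown to be a functor, checking these two equalities suffices to establish commutativity of the square.

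First I would dispose of the object level. For an MT-algebra $M$, unwinding definitions gives $\O(\mathcal{I} M) = \O M$, because $\mathcal{I} M = M$ and $\O$ returns the frame of open elements; on the other side, $\mathcal{U}(\R M) = \mathcal{U}(\S M, \O M) = \O M$, because $\R M = (\S M, \O M)$ and $\mathcal{U}$ projects onto the frame component. Hence both composites send $M$ to the frame $\O M$.

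Next comes the morphism level. Let $f : M \to M'$ be a Raney morphism. On one side, $\O(\mathcal{I} f) = \O(\widehat{f}) = \widehat{f}|_{\O M}$. On the other side, $\mathcal{U}(\R f) = \mathcal{U}(f|_{\S M}) = f|_{\O M}$, using that $\O M \subseteq \S M$ (every open element is trivially a meet of open elements), so that restricting the coframe morphism $f|_{\S M}$ further to $\O M$ simply yields $f|_{\O M}$. Thus the entire claim reduces to the single identity $\widehat{f}|_{\O M} = f|_{\O M}$.

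This identity is precisely the fact already recorded in the proof of \cref{welldefined from R to P}(P1): since $\O M \subseteq \LC M$, for each open $u$ the element $u$ itself lies in $\{x \in \LC M \mid x \le u\}$, so $f(u) \le \widehat{f}(u)$; and because $f$ is order preserving (a consequence of \ref{def: raneymor 3}), every $x \le u$ satisfies $f(x) \le f(u)$, giving $\widehat{f}(u) \le f(u)$. Hence $\widehat{f}(u) = f(u)$ for all $u \in \O M$. I expect no genuine obstacle here; the only point needing care is bookkeeping of the nested restrictions $f \mapsto f|_{\S M} \mapsto f|_{\O M}$ and invoking the established coincidence of $\widehat f$ and $f$ on open elements, after which commutativity on morphisms is immediate.
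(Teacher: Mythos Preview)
Your proposal is correct and follows essentially the same approach as the paper: both verify commutativity on objects and then on morphisms, reducing the latter to the identity $\widehat f|_{\O M} = f|_{\O M}$ already established in the proof of \ref{def: proximitymor 1} in \cref{welldefined from R to P}. Your write-up merely spells out that identity in slightly more detail than the paper does.
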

			
			
			\begin{proof}
				Let $M$ be an MT-algebra. 
				Then 
				\[
				\mathcal U \R M=\mathcal U (\S M , \O M) = \O M = \O \mathcal{I} M.
				\]
				Let $f:M \to M'$ be an $\RMT$-morphism. Then
				\[ 
				\mathcal U\R f = \mathcal U f|_{\S M} = f|_{\O M}=\widehat f|_{\O M}= \O \mathcal{I} f.
				\]
				Thus, the above diagram commutes.
			\end{proof}
			
			
			
			
			\section{Equivalence of $\RMT$ and $\Raney$} \label{sec: equivalence}
			
			
			As mentioned in \cref{Frames and MT-algebras}, the equivalence between $\MTP$ and $\TDMTP$ reflects the fact that isomorphisms in $\MTP$ are not order-isomorphisms, whereas this issue is resolved in $\TDMTP$. A similar situation arises between $\RMT$ and its full subcategory $\TMTR$ of $T_0$-algebras.
			In this section we show that $\R : \RMT \to \Raney$ is an equivalence of categories. This is done by proving that a quasi-inverse of $\R$ may be  constructed by lifting the Funayama envelope construction to a functor $\F : \Raney \to \RMT$. This equivalence 
			restricts to an equivalence between  $\Raney$ and $\TMTR$, where isomorphisms are order-isomorphisms. We thus think of the Funayama envelope as the $T_0$-hull of a Raney extension. 
			Finally, as promised in \cref{sec: Raney morphisms}, we provide an example of an $\Raney$-morphism that does not lift to an MT-morphism, thereby justifying the necessity of considering Raney morphisms. 

				Let $R = (C,L)$ be a Raney extension and let $M := \F R$ be its Funayama envelope. As we pointed out in \cref{sec: MT and RE}, $ L \cong \O M$. Therefore, since $L$ meet-generates $C$ and $\O M$ meet-generates $\S M$, 
					this extends to an isomorphism of Raney extensions $\rho_R : R \to \R \F R$.
					Consequently, we arrive at the following:
					
					\begin{theorem}\label{essentiallysurjective}
						The functor $\R:\RMT \to \Raney$ is essentially surjective.
					\end{theorem}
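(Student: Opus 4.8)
The plan is to prove essential surjectivity by showing that for every Raney extension $R = (C, L)$ we have $\R \F R \cong R$ in $\Raney$. The candidate object of $\RMT$ mapping onto $R$ is the Funayama envelope $M := \F R$, which is an MT-algebra (hence an object of $\RMT$) by the construction recalled in \cref{sec: MT and RE}. Since $\R M = (\S M, \O M)$, it suffices to exhibit an isomorphism of Raney extensions $\rho_R : R \to \R M$, which is exactly the map announced in the paragraph preceding the statement.

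First I would invoke the frame isomorphism $\varphi : L \to \O M$ that accompanies the Funayama envelope (the assertion $\O(\F C, \square) \cong L$ recorded in \cref{sec: MT and RE}). The goal is to extend $\varphi$ to a coframe isomorphism $C \to \S M$ restricting to $\varphi$ on the frame parts. Here the two meet-generation hypotheses do the work: $L$ meet-generates $C$ by \cref{Raneydef}, while $\O M$ meet-generates $\S M$ by the definition of saturated elements. Concretely, set
\[
\rho_R(c) := \bigwedge \{ \varphi(l) \mid l \in L, \ c \leq l \},
\]
and define a candidate inverse analogously from $\varphi^{-1}$. The steps are then: (i) $\rho_R$ is well defined and order preserving; (ii) $\rho_R$ restricts to $\varphi$ on $L$, since for $c \in L$ the element $c$ is the least $l \in L$ above it, so the meet collapses to $\varphi(c)$; (iii) $\rho_R$ preserves arbitrary meets and finite joins (so that it is a coframe morphism), where the distributivity law (1c) of a Raney extension and its counterpart in $M$ are used; and (iv) the map built from $\varphi^{-1}$ is a two-sided inverse, so $\rho_R$ is a bijection. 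Together these give that $\rho_R$ is an isomorphism in $\Raney$, whence $\R M \cong R$ and $\R$ is essentially surjective.

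I expect the crux to lie in steps (iii) and (iv): confirming that $\rho_R$ is a genuine coframe isomorphism and not merely an order-preserving bijection extending $\varphi$. The delicate point is that distinct subsets of $L$ may have equal meets in $C$, and one must verify that this collapsing is faithfully reflected in $\S M$ under $\varphi$. This amounts to the structural fact that, in both $R$ and $\R M$, the coframe is determined up to isomorphism by its meet-generating subframe together with the distributivity condition; consequently the frame isomorphism $\varphi$ propagates uniquely and bijectively to the two coframes, yielding the desired $\rho_R$.
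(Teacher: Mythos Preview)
Your approach matches the paper's exactly: take $M = \F R$, invoke $L \cong \O M$ from the Funayama construction, and extend via meet-generation to an isomorphism $\rho_R : R \to \R\F R$ of Raney extensions. The paper is equally brief, asserting in one sentence that the frame isomorphism extends because $L$ meet-generates $C$ and $\O M$ meet-generates $\S M$; the substantive verification is deferred to \cite{GSRA25}.

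The gap is in your closing justification. The ``structural fact'' you invoke---that the coframe is determined up to isomorphism by its meet-generating subframe together with the distributivity condition---is false as a general principle: a fixed frame $L$ can sit as the subframe of many non-isomorphic Raney extensions (e.g., $(L,L)$ versus larger ones), so a bare frame isomorphism $\varphi : L \to \O M$ has no abstract reason to propagate to the coframes. What actually makes your $\rho_R$ bijective is not a determination principle but the concrete construction of $\F R$: since $\F R$ is the MacNeille completion of the boolean envelope of $C$, the coframe $C$ embeds into $\F R$, this embedding preserves arbitrary meets, and its image is precisely $\S(\F R)$. Your formula for $\rho_R$ then \emph{is} this embedding, and its bijectivity onto $\S M$ is a property of the Funayama envelope of a Raney extension, not of meet-generation in the abstract. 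That is the content of \cite[Prop.~5.2, Thm.~5.4]{GSRA25} on which the paper is implicitly relying; your steps (iii) and (iv) need this input rather than the principle you stated.
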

					
					\begin{remark} \label{rem: rho}
						We frequently identify $R$ with $\R \F R$ treating $\rho_R$ as the identity on $R$.
					\end{remark}
					
					
					
					We now show that the assignment $R \mapsto \F R$ is functorial. For this we recall that each bounded lattice morphism $h:A_1\to A_2$ between bounded distributive lattices lifts uniquely to a boolean morphism $\B h:\B A_1\to\B A_2$ between their boolean envelopes (see, e.g.,  \cite[Sec.~V.4]{balbes74}). For an MT-algebra $M$, the boolean envelope of $\S M$ is isomorphic to the boolean subalgebra $\B\S M$ of $M$ generated by $\S M$ (see, e.g., \cite[p.~99]{balbes74}). We will identify the boolean envelope of $\S M$ with this 
					subalgebra. 
					Thus, if $f:M_1 \to M_2$ is a Raney morphism, 
					\cref{l:raneymorproperties 2}  gives that $\B (f|_{\S M}) = f|_{\B \S M}$. 
					
					
					\begin{lemma}\label{frame to MT morphism}
						Let $R_1=(C_1,L_1)$ and $R_2=(C_2,L_2)$  be Raney extensions and $h:C_1 \to C_2$ an $\Raney$-morphism. Define $\F h:\F R_1 \to \F R_2$ by 
						\begin{eqnarray*}
							\F h(a) = \bigvee \{ \B h(x) \mid x \in \B C_1, \, x \leq a\}.
						\end{eqnarray*}
						Then $\F h$ is a Raney morphism. 
					\end{lemma}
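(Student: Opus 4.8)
The plan is to verify conditions \ref{def: raneymor 1}--\ref{def: raneymor 5} directly, and the single observation that makes all of them routine is that $\F h$ agrees with the boolean lift $\B h$ on $\B C_1$. Recall that, under the identifications of \cref{rem: rho}, we have $\S \F R_i = C_i$, and the boolean envelope $\B C_i$ is the boolean subalgebra $\B\S\F R_i$ of $\F C_i$ generated by $C_i$; moreover $\B h \colon \B C_1 \to \B C_2$ is a boolean morphism, in particular order preserving. Hence for $x \in \B C_1$ the element $\B h(x)$ is the largest term of the join defining $\F h(x)$ (taking $x \le x$), so $\F h(x) = \B h(x)$. Note also that $\F h$ is order preserving, since enlarging $a$ only enlarges the index set $\{x \in \B C_1 \mid x \le a\}$ over which the defining join is taken.

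With these facts, \ref{def: raneymor 5} is just the defining formula rewritten via $\F h|_{\B C_1} = \B h$, and \ref{def: raneymor 4} follows because $\B h$ preserves finite joins and $x \vee y \in \B C_1$. For \ref{def: raneymor 1} and \ref{def: raneymor 2} I would restrict to $C_1$ and to $L_1 \subseteq C_1$ respectively; on both, $\F h$ coincides with $\B h$, which in turn restricts to $h$. Since $h$ is an $\Raney$-morphism, $h \colon C_1 \to C_2$ is a coframe morphism (giving \ref{def: raneymor 1}) and $h|_{L_1} \colon L_1 \to L_2$ is a frame morphism (giving \ref{def: raneymor 2}).

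The remaining and principal condition is \ref{def: raneymor 3}, preservation of all binary meets, and here I would invoke \cref{l: preservation of all finite meets} with the frame $\F C_2$, the map $\F h$, and $S = \B C_1$. Its hypotheses are all in place: $\F h$ is order preserving; $S = \B C_1$ is closed under binary meets as a boolean subalgebra; $\F h$ preserves binary meets from $\B C_1$ because $\F h|_{\B C_1} = \B h$ and $\B h$ is a boolean morphism; and the representation $\F h(a) = \bigvee\{\F h(x) \mid x \in \B C_1, \, x \le a\}$ is exactly \ref{def: raneymor 5}. The lemma then yields that $\F h$ preserves all binary meets.

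I do not expect a genuine obstacle here; the content of the argument is bookkeeping rather than difficulty. The one place that needs care---and which I would state explicitly before the verification---is the chain of identifications $\S \F R_i \cong C_i$ and $\B\S\F R_i \cong \B C_i$, together with the fact that the lattice-theoretic lift $\B h$ is the correct realization of the restriction of $\F h$ to $\B\S\F R_1$, so that \cref{l:raneymorproperties 2} and the preservation lemma apply verbatim. Once this is fixed, each of \ref{def: raneymor 1}--\ref{def: raneymor 5} collapses to a short check, with only \ref{def: raneymor 3} requiring the auxiliary lemma.
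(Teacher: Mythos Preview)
Your proposal is correct and follows essentially the same route as the paper: both first observe that $\F h|_{\B C_1}=\B h$, then derive \ref{def: raneymor 1}, \ref{def: raneymor 2}, \ref{def: raneymor 4}, \ref{def: raneymor 5} by restriction, leaving \ref{def: raneymor 3} as the only nontrivial step. The one difference is in that step: the paper dispatches \ref{def: raneymor 3} by citing an external result \cite[Lem.~4.8]{Bezh2010}, whereas you invoke the paper's own \cref{l: preservation of all finite meets}; your choice is equally valid and arguably more self-contained, since the hypotheses of that lemma are exactly what you have assembled.
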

					
					\begin{proof}
						We verify that $\F h$ satisfies 
						\cref{raneymor}.
						For $y \in \B C_1$, 
						\[
						\F h(y)=\bigvee \{ \B h(x) \mid x \in \B C_1, \, x \leq y \}=\B h(y).
						\]
						Thus $\F h| _{\B C_1} = \B h$. In particular, $\F h| _{C_1} =h$ and $\F h| _{L_1} =h|_{L_1}$. Therefore, \ref{def: raneymor 1} and \ref{def: raneymor 2} 
						hold. 
						By \cite[Lem.~4.8]{Bezh2010}, $\F h(a \wedge b)=\F h(a) \wedge \F h(b)$, and hence \ref{def: raneymor 3}
						holds. 
						For $x,y \in \B C_1$,
						\[\F h(x \vee y) = \B h(x \vee y)= \B h(x) \vee \B h(y) =\F h(x) \vee \F h(y),\] and thus \ref{def: raneymor 4} holds.
						Finally, 
						\[
						\F h(a) = \bigvee \{ \B h(x) \mid x \in \B C_1, \, x \leq a\} = \bigvee \{\F h(x) \mid x \in \B C_1, \, x \leq a\},
						\]
						and so \ref{def: raneymor 5}
						holds, yielding that $\F h$ is a Raney morphism.
					\end{proof}
					
					\begin{proposition}
						$\F: \Raney \to \RMT$ is a functor. 
					\end{proposition}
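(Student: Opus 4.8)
The plan is to get the object-level and morphism-level well-definedness for free and then reduce both functoriality conditions to functoriality of the boolean envelope. On objects, $\F R$ is an MT-algebra by the construction recalled in \cref{sec: MT and RE}; on morphisms, $\F h$ is a Raney morphism by \cref{frame to MT morphism}. So the only remaining obligations are that $\F$ preserves identities and composition. Throughout I would lean on the two facts recorded just before \cref{frame to MT morphism}: the boolean-envelope assignment is functorial on bounded distributive lattices (hence $\B(1_C)=1_{\B C}$ and $\B(g\circ h)=\B g\circ\B h$), and, via the isomorphism $\rho_R$ of \cref{rem: rho}, the boolean envelope $\B C$ of $C$ is identified with $\B\S\F R$. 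Under this identification the index set $\B C$ of the defining join in \cref{frame to MT morphism} is precisely $\B\S\F R$, which is what $id_{\F R}$ and $\star$ range over in $\RMT$.

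For identities, let $1_R$ be the identity $\Raney$-morphism on $R=(C,L)$, whose underlying coframe map is $1_C$. Then for $a\in\F R$,
\[
\F(1_R)(a)=\bigvee\{\B(1_C)(x)\mid x\in\B C,\ x\le a\}=\bigvee\{x\mid x\in\B C,\ x\le a\},
\]
using $\B(1_C)=1_{\B C}$. Identifying $\B C$ with $\B\S\F R$ gives $\bigvee\{x\in\B\S\F R\mid x\le a\}=id_{\F R}(a)$, so $\F(1_R)=id_{\F R}$.

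For composition, let $h:R_1\to R_2$ and $g:R_2\to R_3$ be $\Raney$-morphisms with underlying coframe maps $h:C_1\to C_2$ and $g:C_2\to C_3$, so that their composite in $\Raney$ is $g\circ h$. For $x\in\B C_1$ the computation in \cref{frame to MT morphism} gives $\F h(x)=\B h(x)\in\B C_2$ and $\F g(\B h(x))=\B g(\B h(x))$. Hence, using $\B(g\circ h)=\B g\circ\B h$,
\[
\F(g\circ h)(a)=\bigvee\{\B g(\B h(x))\mid x\in\B C_1,\ x\le a\}=\bigvee\{\F g(\F h(x))\mid x\in\B C_1,\ x\le a\}.
\]
Since $\B C_1=\B\S\F R_1$, the right-hand side is exactly $(\F g\star\F h)(a)$ by the definition of $\star$, so $\F(g\circ h)=\F g\star\F h$.

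Each step is short, and there is no deep obstacle; the only point requiring care is the bookkeeping of identifications, namely that the index set $\B C_i$ of the join defining $\F$ is the same as the index set $\B\S\F R_i$ over which $id$ and $\star$ range in $\RMT$. Once this is pinned down via $\rho_R$, both equalities collapse to functoriality of the boolean envelope, which I expect to be the crux of the verification.
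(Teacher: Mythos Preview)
Your proposal is correct and follows essentially the same approach as the paper: both reduce functoriality of $\F$ to functoriality of the boolean-envelope construction $\B$, using that $\F h$ agrees with $\B h$ on $\B C_1$ and that $\B C_i$ is (via $\rho$) the index set $\B\S\F R_i$ appearing in the definitions of $id$ and $\star$. The only cosmetic difference is that the paper runs the composition computation starting from $(\F g\star\F f)(a)$ and simplifying down to $\F(g\circ f)(a)$, whereas you go in the opposite direction; the steps are identical.
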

					\begin{proof}
						As we saw above, $\F$ is well defined both on objects and morphisms of $\Raney$. We show that $\F$ sends identity morphisms to identity morphisms and preserves composition. Let $R=(C,L)$ be a Raney extension and $a\in \F R$. Since $\B 1_R = 1_{\B C_1}$,  we obtain 
						\[\F 1_{R}(a) = \bigvee \{ \B 1_R(x) \mid x \in \B C_1, \, x \leq a\} = \bigvee \{ x \in \B C_1 \mid x \leq a\} = id_{\F R}(a).\] Therefore, $\F 1_{R} = id_{\F R}$. Next, let $f:C_1\to C_2$ and $g:C_2\to C_3$ be $\Raney$-morphisms between Raney extensions $R_1=(C_1,L_1)$, $R_2=(C_2,L_2)$, and $R_3=(C_3,L_3)$. Then 
						\begin{align*}
							(\F g \star \F f)(a) 
							&= \bigvee \{\F g \F f(x) \mid x \in \B C_1, \, x \leq a \}\\
							&= \bigvee \{\F g \B f(x) \mid x \in \B C_1, \, x \leq a \} && \F f| _{\B C_1} = \B  f\\
							&= \bigvee \{\B g \B f(x) \mid x \in \B C_1, \, x \leq a \} && \F g| _{\B C_2}= \B g; \, x \in \B C_1 \Longrightarrow \B  f(x) \in \B C_2\\
							&= \bigvee \{\B(g \circ f)(x) \mid x \in \B C_1, \, x \leq a \}&& \B g \circ \B  f = \B (g \circ f)\\
							&= \F(g \circ f)(a). &&
							\qedhere
						\end{align*}
					\end{proof}
					
					
					\begin{lemma}\label{adjoint equivalence}
						For an MT-algebra $M$, define $\zeta_M:\F \R M \to M$ by 
						\[
						\zeta_M(a)=\bigvee_M \{ x \in \B \S M \mid x \leq a \}
						\]
						and $\varphi_M : M \to \F \R M$ by 
						\[
						\varphi_M(b)=\bigvee_{\F \S M}\{x \in \B \S M \mid x \leq b\}.
						\] 
						Then $\zeta_M$ and $\varphi_M$ are mutually inverse Raney isomorphisms.
					\end{lemma}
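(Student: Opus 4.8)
The plan is to exploit that both $M$ and $\F \R M$ are completions of the boolean algebra $\B \S M$ and that $\zeta_M,\varphi_M$ merely translate between them through this common subalgebra. Recall from \cref{sec: MT and RE} that $\F \R M$ is the MacNeille completion of the boolean envelope of $\S M$, which we identify with $\B \S M$; write $\iota : \B \S M \hookrightarrow \F \R M$ for the resulting join- and meet-dense embedding, keeping the inclusion $\B \S M \hookrightarrow M$ implicit. The first thing I would record is that both maps act as the identity on $\B \S M$: for $x \in \B \S M$ the set $\{y \in \B \S M \mid y \le x\}$ has largest element $x$, so $\varphi_M(x) = \iota(x)$ and $\zeta_M(\iota(x)) = x$. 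In particular $\zeta_M$ and $\varphi_M$ are order preserving, \ref{def: raneymor 5} is immediate from their definitions, and \ref{def: raneymor 4} follows because $\B \S M$ is closed under joins and the maps fix it.

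The technical heart is \ref{def: raneymor 1} and \ref{def: raneymor 2}, for which I would show that $\iota$ restricts to a coframe isomorphism $\S M \cong \S \F \R M$ and a frame isomorphism $\O M \cong \O \F \R M$. Since opens are saturated, $\O M \subseteq \S M \subseteq \B \S M$. The interior operator on $\F \R M$ is the composite of the right adjoint with the embedding $\O M \to \F \R M$, whose fixpoints are exactly its image, so $\O \F \R M = \iota[\O M]$. For saturation, given $s = \bigwedge_M U$ with $U \subseteq \O M$, one checks that $s$ is already the meet of $U$ in $\B \S M$; as the MacNeille embedding preserves existing meets, $\iota(s) = \bigwedge_{\F \R M} \iota[U] \in \S \F \R M$, and conversely every saturated element of $\F \R M$ arises this way. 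Thus $\iota$ restricts to order-isomorphisms onto $\S \F \R M$ and $\O \F \R M$, which are automatically coframe and frame isomorphisms. Since $\varphi_M$ agrees with $\iota$ and $\zeta_M$ with its inverse on these sublattices, \ref{def: raneymor 1} and \ref{def: raneymor 2} hold for both maps.

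For \ref{def: raneymor 3} I would invoke \cref{l: preservation of all finite meets}: both $M$ and $\F \R M$ are complete boolean algebras, hence frames, the subset $\B \S M$ (resp.\ $\iota[\B \S M] = \B \S \F \R M$) is closed under binary meets, the maps preserve those meets because $\iota$ does, and the join-density hypothesis is exactly \ref{def: raneymor 5}; the lemma then yields preservation of all binary meets, completing the verification that $\zeta_M$ and $\varphi_M$ are Raney morphisms. Finally, using the definition of $\star$ together with the fact that the two maps are mutually inverse on $\B \S M$, I would compute for $b \in M$ that $(\zeta_M \star \varphi_M)(b) = \bigvee_M \{\zeta_M(\varphi_M(x)) \mid x \in \B \S M,\ x \le b\} = \bigvee_M \{x \in \B \S M \mid x \le b\} = id_M(b)$, and symmetrically $(\varphi_M \star \zeta_M) = id_{\F \R M}$ (using $\B \S \F \R M = \iota[\B \S M]$), so that $\zeta_M$ and $\varphi_M$ are mutually inverse in $\RMT$.

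I expect the main obstacle to be the second paragraph: pinning down that $\iota$ identifies $\S M$ with $\S \F \R M$ and $\O M$ with $\O \F \R M$ as sublattices. This is where one must carefully juggle three ambient orders—$M$, $\B \S M$, and its MacNeille completion—and use that the completion preserves precisely those meets that already exist in $\B \S M$. Once this identification is in place, the remainder is largely formal bookkeeping via \cref{l: preservation of all finite meets} and the definition of $\star$.
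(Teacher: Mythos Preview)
Your proposal is correct and follows essentially the same route as the paper: both note that $\zeta_M$ and $\varphi_M$ fix $\B\S M$, derive \ref{def: raneymor 4} and \ref{def: raneymor 5} from this, obtain \ref{def: raneymor 3} via \cref{l: preservation of all finite meets}, and verify mutual inverseness by the same $\star$-computation. The only difference is that you spell out in detail the identification of $\S M$ with $\S\F\R M$ and $\O M$ with $\O\F\R M$, whereas the paper treats this as already established (via the isomorphism $\rho_R:R\to\R\F R$ discussed just before \cref{essentiallysurjective} and the convention of \cref{rem: rho}), so it simply asserts that $\zeta_M$ is the identity on $\S M$ and moves on.
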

					\begin{proof}
						Since $\zeta_M$ is identity on both $\B \S M$ and $\S M$, 
						it satisfies \ref{def: raneymor 5}, \ref{def: raneymor 4}, \ref{def: raneymor 1}, and \ref{def: raneymor 2}. It also satisfies \ref{def: raneymor 3} by \cref{l: preservation of all finite meets}. 
						Therefore, $\zeta_M$ is a Raney morphism. That $\varphi_M$ is a Raney morphism is proved similarly. It is left to show that $\zeta_M$ and $\varphi_M$ are mutually inverse in $\RMT$. Since ${\zeta_M(x) = \varphi_M(x) = x}$ for each $x\in \B \S M$, for $a \in M$, we have 
						\begin{eqnarray*}
							(\zeta_M \star \varphi_M) (a) &=& \bigvee_M \{\zeta_M \varphi_M(x) \mid x \in \B \S M, \, x \leq a \} \\
							&=& \bigvee_M \{x \in \B \S M \mid x \leq a\} \\
							&=& id_{M}(a);
						\end{eqnarray*}
						and for $b \in \F \S M$, we have 
						\begin{eqnarray*}
							(\varphi_M \star \zeta_M) (b) &=& \bigvee_{\F \S M} \{ \varphi_M \zeta_M(x) \mid x \in \B\S M, \, x \leq b \} \\
							&=& \bigvee_{\F \S M} \{x \in \B \S M \mid x \leq b\} \\
							&=& id_{\F \R M}(b), 
						\end{eqnarray*}
						concluding the proof.
					\end{proof}
					
					\begin{lemma}\label{natural transformation}
						$ $
						\begin{enumerate}
							\item $\rho: 1_{\Raney} \to \R \F$ is a natural transformation. 
							\item $\zeta : \F  \R \to 1_{\RMT}$ is a natural transformation.
						\end{enumerate}
						
					\end{lemma}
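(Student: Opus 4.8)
The plan is to verify in each case that the relevant naturality square commutes, reducing the verification to an elementwise computation on a generating subset. The key reduction principle is that morphisms are determined by their behaviour on generators: in $\Raney$ a morphism is determined by its underlying function, while in $\RMT$ a morphism $g\colon N\to N'$ is determined by its restriction to $\B\S N$, since \ref{def: raneymor 5} gives $g(a)=\bigvee\{g(x)\mid x\in\B\S N,\ x\le a\}$, so two parallel Raney morphisms coincide as soon as they agree on $\B\S N$. I would exploit this repeatedly, together with the fact that $\star$-composition reduces to ordinary composition on $\B\S$-elements.

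For part (1), I would first record that $\rho_R\colon R\to\R\F R$ is the corestriction to $\S\F C$ of the canonical embedding $C\hookrightarrow\B C\hookrightarrow\F C$, so that $\rho_R(c)=c$ once $c\in C$ is viewed inside $\F C$. Given an $\Raney$-morphism $h\colon R_1\to R_2$, commutativity amounts to checking $\R\F h\circ\rho_{R_1}=\rho_{R_2}\circ h$ as coframe maps $C_1\to\S\F C_2$, so it suffices to evaluate on an arbitrary $c\in C_1$. Here I would use that $\R\F h=\F h|_{\S\F C_1}$ together with \cref{frame to MT morphism}, which gives $\F h|_{\B C_1}=\B h$ and in particular $\F h(c)=\B h(c)=h(c)$ for $c\in C_1$. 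Both composites therefore send $c$ to $h(c)$, establishing naturality.

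For part (2), let $f\colon M_1\to M_2$ be a Raney morphism. Both $f\star\zeta_{M_1}$ and $\zeta_{M_2}\star\F\R f$ are Raney morphisms $\F\R M_1\to M_2$, being $\star$-composites of Raney morphisms, so by the reduction above it suffices to show they agree on $\B\S\F\R M_1$. Under the identification of the boolean envelope of $\S M$ with $\B\S M$, this subalgebra is literally $\B\S\F\R M_1=\B\S M_1$, and $\zeta_M$ restricts to the identity on it. I would then compute, for $y\in\B\S M_1$, that $(f\star\zeta_{M_1})(y)=f(\zeta_{M_1}(y))=f(y)$ and $(\zeta_{M_2}\star\F\R f)(y)=\zeta_{M_2}(\F\R f(y))=f(y)$, where the final step uses $\F\R f|_{\B\S M_1}=\B(\R f)=f|_{\B\S M_1}$ by \cref{l:raneymorproperties 2} and \cref{frame to MT morphism}. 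Since both values equal $f(y)$, the two morphisms agree on $\B\S M_1$ and hence everywhere.

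The routine calculations are negligible; the main obstacle is bookkeeping the identifications. Specifically, I must confirm that $\rho_R$ really is the embedding corestricted to the saturated elements, so that $\rho_R(c)=c$, and that $\B\S\F\R M$ coincides with $\B\S M$ on the nose with $\zeta_M$ acting as the identity there. Once these identifications are justified from the constructions of $\F$, $\R$, and $\zeta_M$ recalled in the excerpt, both naturality squares collapse to the trivial identities $h(c)=h(c)$ and $f(y)=f(y)$.
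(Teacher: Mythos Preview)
Your proposal is correct and follows essentially the same approach as the paper. For part (1) both you and the paper reduce to checking $\F h(c)=h(c)$ for $c\in C_1$ via the identification $\rho_R(c)=c$; for part (2) the paper computes $\zeta_{M_2}\F\R g(x)=g\zeta_{M_1}(x)$ on $x\in\B\S M_1$ and then extends via the $\star$-formula, which is exactly your reduction principle (two Raney morphisms agreeing on $\B\S$ coincide by \ref{def: raneymor 5}) unpacked inline.
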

					
					\begin{proof}
						(1) Let $f:C_1\to C_2$ be an $\Raney$-morphism 
						between Raney extensions $R_1=(C_1,L_1)$ and $R_2=(C_2,L_2)$. We must show that the following diagram commutes.
						\[\begin{tikzcd}[column sep=5em]
							C_1 \ar[r, "f"] \ar[d, "\rho_{R_1}" ']& C_2 \ar[d, "\rho_{R_2}"]\\
							\R\F R_1 \ar[r, "\R\F f"']  &  \R \F R_2 
						\end{tikzcd}\]
						For $i=1,2$, we identify $C_i$ with $\rho_{R_i}[C_i]$ and assume that $C_i\subseteq \F R_i$ (see \cref{rem: rho}). Since
						the functor $\R$ sends a Raney morphism 
						to its restriction to the coframe of saturated elements,
						commutativity of the diagram amounts to showing that $\F f(a)=f(a)$ for each $a\in C_1$, which follows from the definition of $\F f$. 
						
						(2) Let $g:M_1 \to M_2$ be a Raney morphism between MT-algebras. We must show that the following diagram commutes. 
						\[\begin{tikzcd}[column sep=5em]
							M_1 \ar[r, "g"] & M_2 \\
							\F \R M_1 \ar[r, "\F\R g"'] \ar[u, "\zeta_{M_1}"] &  \F \R M_2 \ar[u, "\zeta_{M_2}"']
						\end{tikzcd}\]
						First, let $x \in \B \S M_1$. Then $g(x) \in \B \S M_2$ by \cref{l:raneymorproperties 2}. Thus, 
						\begin{align*}
							\zeta_{M_2} \F\R g(x)  
							&=\zeta_{M_2}\F \left(g|_{\S M_1}\right)(x) \\
							&= \zeta_{M_2} \B \left(g|_{\S M_1}\right)(x)\\
							&= \zeta_{M_2}g|_{\B \S M_1}(x) && 
								\B \left(g|_{\S M_1}\right) =  g|_{\B\S M_1} \\
								&= \zeta_{M_2}g(x)\\
								&= g(x) &&    \zeta_{M_2}g(x)=g(x) \\
								& = g\zeta_{M_1}(x) && \zeta_{M_1}(x)=x.
							\end{align*}
							Next, let  $a \in \F\S M_1$. Then, by the above, 
							\begin{align*}
								(\zeta_{M_2} \star \F\R g)(a) &= \bigvee \{\zeta_{M_2}\F\R g(x) \mid x \in \B \S M_1, \, x \leq a \}\\
								&= \bigvee \{g\zeta_{M_1}(x) \mid x \in \B \S M_1, \, x \leq a \} = (g \star \zeta_{M_1})(a). \qedhere
							\end{align*}

						\end{proof}
						
						\begin{theorem}\label{adjunction of raney and MT}
							The functors $\R$ and $\F$ 
							establish an equivalence of $\RMT$ and $\Raney$.
						\end{theorem}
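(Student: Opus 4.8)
The plan is to assemble the equivalence from the pieces already established, exhibiting $\R$ and $\F$ as quasi-inverse functors via the natural transformations $\rho$ and $\zeta$ constructed above. By \cref{adjoint equivalence}, for each MT-algebra $M$ the map $\zeta_M : \F\R M \to M$ is a Raney isomorphism, and by \cref{natural transformation}(2) the family $\zeta$ is natural, so $\zeta : \F\R \to 1_{\RMT}$ is a natural isomorphism. On the other side, \cref{natural transformation}(1) gives that $\rho : 1_{\Raney} \to \R\F$ is a natural transformation, and each component $\rho_R : R \to \R\F R$ is an isomorphism of Raney extensions (as noted in the paragraph preceding \cref{essentiallysurjective}, where $\rho_R$ is built from the isomorphism $L \cong \O\F R$ and the fact that both $L$ and $\O\F R$ meet-generate their respective coframes). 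Hence $\rho : 1_{\Raney} \to \R\F$ is also a natural isomorphism.

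First I would record these two facts explicitly: $\zeta$ and $\rho$ are natural isomorphisms with inverses $\varphi$ (from \cref{adjoint equivalence}) and $\rho^{-1}$ respectively. Once both unit and counit are natural isomorphisms, the two functors $\R : \RMT \to \Raney$ and $\F : \Raney \to \RMT$ constitute an equivalence of categories by the standard characterization: a functor is part of an equivalence precisely when there exist natural isomorphisms $1 \cong \F\R$ and $\R\F \cong 1$. We already have $\R\F \cong 1_{\Raney}$ via $\rho^{-1}$ and $\F\R \cong 1_{\RMT}$ via $\zeta$, so nothing further is needed beyond invoking this criterion.

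The only point requiring a little care is confirming that $\rho_R$ is genuinely an isomorphism in $\Raney$ and not merely a morphism, since \cref{natural transformation}(1) only asserts naturality. I would verify this by unpacking $\R\F R = (\S\F R, \O\F R)$: the isomorphism $L \cong \O\F R$ established in the construction of $\F R$ in \cref{sec: MT and RE}, together with the fact that saturated elements are exactly the meets of open elements and $L$ meet-generates $C$, forces the induced coframe map $C \to \S\F R$ to be an order-isomorphism restricting to a frame isomorphism on the open parts. This is essentially the content already summarized before \cref{essentiallysurjective}, so it can be cited rather than reproven.

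I do not expect a genuine obstacle here, as the substantive work has been done in \cref{adjoint equivalence,natural transformation,essentiallysurjective}; the remaining task is purely formal assembly. If anything, the subtlest step is making sure the inverse transformations are themselves natural, but for $\zeta$ this follows because a pointwise inverse of a natural isomorphism is automatically natural, and for $\rho$ the same general principle applies. I would therefore write the proof as a short invocation of these lemmas followed by the standard equivalence criterion.
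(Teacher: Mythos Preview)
Your proposal is correct and assembles the same ingredients as the paper---naturality from \cref{natural transformation}, componentwise invertibility of $\zeta$ from \cref{adjoint equivalence}, and componentwise invertibility of $\rho$ from the paragraph before \cref{essentiallysurjective}---and then concludes via the standard criterion that two functors connected by natural isomorphisms $1\cong\F\R$ and $\R\F\cong 1$ are an equivalence. The paper does one additional step you omit: it verifies the triangle identities, thereby exhibiting $\rho$ and $\zeta$ as the unit and counit of an actual adjunction $\F\dashv\R$ (an \emph{adjoint} equivalence). Since the theorem as stated only asserts an equivalence, your shortcut is entirely legitimate; the paper's extra verification buys the adjunction structure explicitly but is not required for the statement being proved.
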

						\begin{proof} 
							By \cref{natural transformation}, $\rho$ and $\zeta$ are natural transformations. By \cref{adjoint equivalence}, $\zeta$ is an isomorphism on all components, and the same is true for $\rho$ by the paragraph before  \cref{essentiallysurjective}. 
							Thus, it suffices to show that these are
							the unit and counit of the adjunction $\F\dashv \R$. 
							
							Let $M$ be an MT-algebra. 
							In view of our identifications, $\R \zeta_{M}$ and $\rho_{\R M}$ are identities. 
							Hence, for $s \in \S M$, we have 
							\[
							(\R \zeta_{M} \circ \rho_{\R M})(s)=\R \zeta_M(s)=s.
							\]
							
							Let $R=(C,L)$ be a Raney extension. 
							Again, by our identifications,
							$\rho_{R}$  
							and $\B \rho_{R}$ are identities. 
							Therefore, for $x \in \B C$,
							\[(\zeta_{\F R} \circ \F\rho_{R})(x)=\zeta_{\F R} \B\rho_{R}(x)=\zeta_{\F R}(x)=x.\]
							Thus, for $a \in \F R$,
							\begin{eqnarray*}
								(\zeta_{\F R} \circ \F\rho_{R})(a) &=& \bigvee \{
								\zeta_{\F R}\F\rho_{R}(x) \mid  x \in \B C, \, x \leq a \} \\ &=& \bigvee \{x \in \B C \mid x \leq a \} = a,
							\end{eqnarray*}
							concluding the proof.
						\end{proof}
						
						
						As with proximity morphisms between MT-algebras \cite[Ex.~3.14]{bezhanishvili2025funayamaenvelopetdhullframe}, isomorphisms in $\RMT$ are not structure-preserving bijections. In fact, the same example 
						works because in the finite case, open and saturated elements coincide. Since $\R:\RMT\to \Raney$ is an equivalence of categories, from \cite[Prop.~7.47]{AHS2006} we obtain the following characterization of isomorphisms in $\RMT$: 
						
						\begin{proposition}\label{l:isochar}
							Let $f:M\to M'$ be a Raney morphism between MT-algebras.
							\begin{enumerate} [ref=\theproposition(\arabic*)]
								\item $f$ is an isomorphism iff $\R f$ is an isomorphism of Raney extensions. \label [proposition]{iso}
								\item $f$ is a monomorphism iff $\R f$ is a monomorphism of Raney extensions. \label [proposition]{mono}
								\item $f$ is an epimorphism iff $\R f$ is an epimorphism of Raney extensions. \label [proposition]{epi}
							\end{enumerate} 
						\end{proposition}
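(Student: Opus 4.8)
The plan is to deduce all three statements from the single fact, established in \cref{adjunction of raney and MT}, that $\R : \RMT \to \Raney$ is an equivalence of categories, together with the general principle that being an isomorphism, a monomorphism, or an epimorphism is a purely categorical property that is both preserved and reflected by any equivalence. This is precisely the content of \cite[Prop.~7.47]{AHS2006}, so once the equivalence is in hand the proposition is essentially immediate. Thus the work is organized as a bookkeeping of preservation and reflection for each of the three classes of morphisms, with the quasi-inverse $\F$ and the natural isomorphisms $\rho$ and $\zeta$ of \cref{natural transformation} doing the transporting.

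For part (1), I would note that every functor preserves isomorphisms, so $f$ being an isomorphism forces $\R f$ to be one. Conversely, since an equivalence is fully faithful it reflects isomorphisms: if $\R f$ is an isomorphism, then applying $\F$ and transporting $\F\R f$ back to $f$ along $\zeta : \F\R \to 1_{\RMT}$ shows that $f$ is an isomorphism as well.

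For parts (2) and (3), the reflection direction (namely $\R f$ mono/epi implies $f$ mono/epi) follows from faithfulness of $\R$, since any faithful functor reflects both monomorphisms and epimorphisms. The preservation direction uses that the quasi-inverse $\F$ is also faithful: if $f$ is a monomorphism (resp.\ epimorphism), then the natural isomorphisms $\rho$ and $\zeta$ identify $f$ with $\F\R f$, so $\F\R f$ is a monomorphism (resp.\ epimorphism); since $\F$ reflects these, we conclude that $\R f$ is a monomorphism (resp.\ epimorphism).

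I expect no genuine obstacle, as all the substantive difficulty was already absorbed into the proof of \cref{adjunction of raney and MT}. The one subtlety worth flagging is that preservation of monomorphisms and epimorphisms is not automatic for an arbitrary functor; it is the full strength of the equivalence—equivalently, the availability of the faithful quasi-inverse $\F$ together with the natural isomorphisms—that is being invoked here, rather than mere functoriality of $\R$.
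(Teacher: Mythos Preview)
Your proposal is correct and matches the paper's own approach exactly: the paper simply notes that $\R:\RMT\to\Raney$ is an equivalence of categories and invokes \cite[Prop.~7.47]{AHS2006} without further elaboration. Your expanded account of how faithfulness and the quasi-inverse handle preservation and reflection is accurate but more detailed than what the paper provides.
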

						
						In particular, in $\RMT$ there exist monomorphisms that are not injective and epimorphisms that are not surjective (again, see \cite[Ex.,3.14]{bezhanishvili2025funayamaenvelopetdhullframe}). This counterintuitive behavior disappears once we restrict our attention to $T_0$-algebras.
						\begin{proposition}\label{prop: proximity isos}
							A Raney morphism $f:M\to M'$ between $T_0$-algebras is an $\RMT$-isomorphism 
							iff it is an order-isomorphism.
						\end{proposition}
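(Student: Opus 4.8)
The plan is to reduce everything to \cref{iso}, which says that $f$ is an $\RMT$-isomorphism exactly when $\R f = f|_{\S M}$ is an isomorphism of Raney extensions. The engine of the argument is the observation that for a $T_0$-algebra $M$ the $\RMT$-identity $id_M$ is the honest identity map: by \cref{prop: char of T0} the algebra $\B\S M$ join-generates $M$, so $id_M(a)=\bigvee\{x\in\B\S M\mid x\le a\}=a$ for every $a$; and since $\B\S M$ is a Boolean subalgebra, hence closed under $\neg$, taking complements shows that $\B\S M$ also meet-generates $M$, that is $a=\bigwedge\{y\in\B\S M\mid a\le y\}$. I will read ``order-isomorphism'' as a bijection $f$ for which $f$ and $f^{-1}$ are both order preserving and $f(\O M)=\O M'$ (equivalently, a complete Boolean isomorphism commuting with $\square$).

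For the forward implication, suppose $f$ is an $\RMT$-isomorphism with $\RMT$-inverse $g$, so that $g\star f=id_M$ and $f\star g=id_{M'}$, both the identity map by the remark above. Since $(g\star f)(x)=g(f(x))$ for $x\in\B\S M$, evaluating on $\B\S M$ gives $g(f(x))=x$ for all $x\in\B\S M$, and symmetrically $f(g(y))=y$ for $y\in\B\S M'$. Both $f$ and $g$ are order preserving by \ref{def: raneymor 3}. Hence for any $a\in M$: from below, $g(f(a))\ge g(f(x))=x$ for every $x\in\B\S M$ with $x\le a$, so $g(f(a))\ge a$ by join-generation; from above, $g(f(a))\le g(f(y))=y$ for every $y\in\B\S M$ with $a\le y$, so $g(f(a))\le a$ by meet-generation. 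Thus $g\circ f=id_M$ as a function, and symmetrically $f\circ g=id_{M'}$, so $f$ is a genuine order-preserving bijection with order-preserving inverse $g$. Finally \cref{iso} gives that $\R f$ is a Raney-extension isomorphism, whence $f$ carries $\O M$ onto $\O M'$; being a poset isomorphism that preserves $\O$, it then commutes with $\square$, so $f$ is an order-isomorphism.

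For the converse, suppose $f$ is an order-isomorphism. Since $f$ preserves arbitrary meets and $\S M$ consists of meets of open elements, $f$ carries $\S M$ onto $\S M'$ (using $f(\O M)=\O M'$), and $f^{-1}$ does the reverse. Thus $f|_{\S M}\colon\S M\to\S M'$ is a bijection which, being the restriction of a poset isomorphism mapping $\S M,\O M$ onto $\S M',\O M'$, is a coframe isomorphism restricting to a frame isomorphism $\O M\to\O M'$, with inverse $f^{-1}|_{\S M'}$. Hence $\R f=f|_{\S M}\colon(\S M,\O M)\to(\S M',\O M')$ is an isomorphism of Raney extensions, and \cref{iso} yields that $f$ is an $\RMT$-isomorphism.

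The main obstacle is the forward direction, specifically upgrading the $\star$-inverse $g$ to a genuine two-sided inverse function. The composite $g\star f$ records only the values of $g\circ f$ on $\B\S M$, and in general one only has $(g\star f)(a)\le (g\circ f)(a)$, with strict inequality possible; what rescues the $T_0$ case is precisely that $\B\S M$ both join- and meet-generates $M$, squeezing $g(f(a))$ between $a$ and $a$. A subtlety to flag is the meaning of ``order-isomorphism'': a mere poset isomorphism that happens to be a Raney morphism need not be an $\RMT$-isomorphism, since such an $f$ may fail to send $\O M$ onto $\O M'$; the forward direction shows that this additional surjectivity onto the open elements is automatic once $f$ is an $\RMT$-isomorphism, which is why the equivalence holds for the structure-preserving notion of order-isomorphism.
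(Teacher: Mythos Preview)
Your proof is correct, and both directions take a different route from the paper. For the forward implication the paper argues through the equivalence: $\R f$ is an $\Raney$-isomorphism by \cref{iso}, so $\B\R f$ is a Boolean isomorphism which lifts to an order-isomorphism of the MacNeille completions $\F\S M\cong\F\S M'$, and identifying $M\cong\F\R M$, $M'\cong\F\R M'$ via \cref{lem: Raney is t0} exhibits $f$ itself as an order-isomorphism. Your argument is more elementary: once $id_M$ is the honest identity and $\B\S M$ both join- and meet-generates $M$, the squeeze $a\le g(f(a))\le a$ forces $g\circ f$ to be the identity function, bypassing the Funayama and MacNeille machinery entirely. For the converse the paper directly computes $f^{-1}\star f=id_M$ and $f\star f^{-1}=id_{M'}$, whereas you verify that $\R f$ is an $\Raney$-isomorphism and invoke \cref{iso}.

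Your caution about the meaning of ``order-isomorphism'' is well placed and worth recording. Under the bare poset reading the converse in fact fails: for $X=\{a,b\}$, the identity $\mathcal P(X)\to\mathcal P(X)$ from the Sierpi\'nski topology $\{\emptyset,\{a\},X\}$ to the discrete topology is a Raney morphism between $T_0$-algebras and a poset isomorphism, yet $\R(id)$ is the non-surjective inclusion of a three-chain into the four-element Boolean algebra, so it is not an $\RMT$-isomorphism. The paper's converse tacitly needs $f^{-1}$ to be a Raney morphism, which requires precisely the condition $f(\O M)=\O M'$ that you build into your reading; so your interpretation is the one under which the proposition holds, and your final paragraph isolates the issue exactly.
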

						
						\begin{proof}
							First suppose 
							$f:M\to M'$ is an $\RMT$-isomorphism between $T_0$-algebras. By  \cref{iso}, $\R f$ is an $\Raney$-isomorphism. 
								Since $\Raney$-isomorphisms are 
								coframe isomorphisms, 
								$\B \R f:\B \S M\to \B \S M'$ is a boolean isomorphism. Therefore, it can be lifted to an order-isomorphism between $\F\S M$ and $\F\S M'$ (see, e.g., \cite[Thm.~7.41(ii)]{Davey2002}), which coincides with $\F \R f$ since it preserves arbitrary joins.
								Because $M$ and $M'$ are $T_0$-algebras, they are order-isomorphic to $ \F \R M$ and $\F \R M'$, respectively (see \cref{lem: Raney is t0}). Thus, up to order-isomorphism, $f= \F \R f$, yielding that $f$ is an order-isomorphism.
								
								Conversely, suppose 
								$f:M\to M'$ is an order-isomorphism. Then its inverse $f^{-1}:M'\to M$ is an order-isomorphism. Therefore, for $a\in M$, we have
								\begin{align*}
									(f^{-1}\star f)(a)=\bigvee \{f^{-1} f(x)\mid x\in \B \S M, \, x\leq a\}=\bigvee \{x\in \B \S M  \mid x\leq a\}=id_M(a).
								\end{align*}
								A similar argument yields that $f\star f^{-1}=id_{M'}$. Thus, $f$ is an $\RMT$-isomorphism. 
							\end{proof}
							
							
							
							
							
							
							\color{black}

							Let $\TMTR$ be the full subcategory of $\RMT$ consisting of $T_0$-algebras. We record the following structural features of $\TMTR$:
							
							\begin{proposition}
								\label{rem: T0MTR}
								$ $
								\begin{enumerate}[ref=\theproposition(\arabic*)]
									\item Identities in $\TMTR$ are identity functions.
									\item Each MT-morphism between $T_0$-algebras is a Raney morphism. 
									\label[proposition]{rem: T0MTR 2}
								\end{enumerate}
							\end{proposition}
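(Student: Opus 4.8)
The plan is to prove the two parts separately, relying on the identification of $T_0$-algebras with Funayama envelopes of Raney extensions (\cref{lem: Raney is t0}) and on the characterization of Raney morphisms via the in-betweenness condition established in \cref{lem: eqv morphisms}.

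For part (1), I would argue that if $M$ is a $T_0$-algebra, then $\B\S M$ join-generates $M$ by \cref{prop: char of T0}. The identity Raney morphism is defined by $id_M(a) = \bigvee\{x \in \B\S M \mid x \le a\}$, so it suffices to observe that join-generation means precisely that this supremum recovers $a$ for every $a \in M$; hence $id_M(a) = a$ and $id_M$ is the identity function. This is the key point: outside the $T_0$ setting, $id_M$ is only the $\B\S M$-approximation of the identity (which is exactly why $\RMT$-isomorphisms need not be order-isomorphisms, as noted before \cref{prop: proximity isos}), but $T_0$-separation forces it to be the genuine identity.

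For part (2), let $f : M \to M'$ be an MT-morphism between $T_0$-algebras. I would verify the five Raney-morphism axioms \ref{def: raneymor 1}--\ref{def: raneymor 5} directly. Since $f$ is a complete boolean morphism whose restriction to $\O M$ is a frame morphism, it restricts to a frame morphism on open elements, giving \ref{def: raneymor 2}; the restriction to $\S M$ preserves arbitrary meets (as $f$ preserves all meets, being a complete boolean morphism) and sends open elements to open elements, and using that $f$ preserves the joins of open elements that build saturated elements together with the coframe distributivity, one checks it is a coframe morphism, giving \ref{def: raneymor 1}. Axioms \ref{def: raneymor 3} and \ref{def: raneymor 4} hold because $f$ preserves all finite meets and all joins (it is a complete boolean morphism). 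Finally, \ref{def: raneymor 5} asserts $f(a) = \bigvee\{f(x) \mid x \in \B\S M,\ x \le a\}$; here I would use that $M$ is a $T_0$-algebra so $a = \bigvee\{x \in \B\S M \mid x \le a\}$, and that $f$ preserves this join, yielding the identity.

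The main obstacle I anticipate is verifying \ref{def: raneymor 1}, namely that $f|_{\S M}$ is a genuine \emph{coframe} morphism rather than merely a meet-preserving map: one must confirm that it preserves the binary joins computed in $\S M$, which are not the ambient joins in $M$ but rather $s_1 \vee_{\S M} s_2 = \bigwedge\{u \in \O M \mid s_1, s_2 \le u\}$. Showing $f$ respects this requires combining preservation of arbitrary meets with the frame-morphism property on $\O M$ and the distributivity axiom (1c) of the Raney extension $\R M$; this is the one place where the argument is not a one-line appeal to $f$ being a complete boolean morphism, and it is where I would spend the most care.
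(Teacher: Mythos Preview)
Your proposal is correct and follows essentially the same approach as the paper. For part (1) the arguments are identical; for part (2) the paper simply asserts that an MT-morphism satisfies \ref{def: raneymor 1}--\ref{def: raneymor 4} and then gives exactly your argument for \ref{def: raneymor 5}, whereas you spell out the verification of \ref{def: raneymor 1} in more detail. One small correction: the distributivity you need for the \ref{def: raneymor 1} check is the coframe law (axiom (1a) of \cref{Raneydef}, i.e.\ $s \vee \bigwedge T = \bigwedge\{s \vee t\}$), not axiom (1c); using it together with the fact that $\O M$ meet-generates $\S M$ and is a subframe, one writes $s_1 \vee_{\S M} s_2 = \bigwedge_{i,j}(u_i \vee v_j)$ for $s_1 = \bigwedge_i u_i$, $s_2 = \bigwedge_j v_j$ with $u_i,v_j$ open, and then $f$ visibly commutes with this expression.
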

							
							\begin{proof}
								(1) Let $M$ be an MT-algebra. By \cref{prop: char of T0}, 
								\[
								M \mbox{ is a $T_0$-algebra} \iff \forall a \in M, \ a=\bigvee \{ x \in \B \S M \mid x \le a \} \iff \forall a \in M, \ a = id_M(a). 
								\]
								
								(2) Let $f:M\to M'$ be an MT-morphism between $T_0$-algebras.  
								Then $f$ satisfies \ref{def: raneymor 1}--\ref{def: raneymor 4}. To see that it satisfies \ref{def: raneymor 5},
								let $a\in M$. Since $M$ is a $T_0$-algebra, ${a=\bigvee \{x\in \B \S M \mid x\leq a\}}$. Because $f$ preserves all joins, 
								\[
								f(a)=\bigvee \{f(x)\mid x\in \B \S M, \, x\leq a\}.
								\]
								Thus, $f$ is a Raney morphism. \qedhere
								
								
							\end{proof}
							
							We also 
							obtain the following  analogue of \cref{MTpequivalenttoframes 2} for $T_0$-algebras and Raney extensions.
							
							\begin{theorem}\label{WMT_D&Frm}
								The equivalence 
								$\R : \RMT \leftrightarrows \Raney : \F$ restricts to an equivalence between $\TMTR$ and $\Raney$. Consequently, the reflector $\F\R : \RMT \to \TMTR$ is an equivalence.
							\end{theorem}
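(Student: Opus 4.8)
The plan is to deduce everything from the equivalence $\F \dashv \R$ of \cref{adjunction of raney and MT}, the only real work being to check that $\F$ already lands inside $\TMTR$. First I would verify that $\F R$ is a $T_0$-algebra for every Raney extension $R=(C,L)$. Using the identification $R = \R\F R$ of \cref{rem: rho}, the saturated elements of $\F R$ are exactly $C$, so $\B\S(\F R) = \B C$. Since the underlying complete boolean algebra of $\F R$ is, by construction, the MacNeille completion $\F C$ of the boolean envelope $\B C$, the subalgebra $\B C$ is join-dense in $\F R$, i.e.\ it join-generates $\F R$. By \cref{prop: char of T0}, this makes $\F R$ a $T_0$-algebra. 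Consequently $\F$ corestricts to a functor $\F : \Raney \to \TMTR$.

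Next I would restrict the equivalence. Because $\TMTR$ is a full subcategory, $\R$ restricts to $\R|_{\TMTR} : \TMTR \to \Raney$, and the two natural isomorphisms of \cref{adjunction of raney and MT} descend: $\rho : 1_{\Raney} \to \R\F$ is unchanged, while the counit $\zeta : \F\R \to 1_{\RMT}$ of \cref{natural transformation} restricts to a natural transformation $\F\R|_{\TMTR} \to 1_{\TMTR}$ whose components $\zeta_M$ are isomorphisms by \cref{adjoint equivalence} (and are $\TMTR$-morphisms by fullness). Hence $\R|_{\TMTR}$ and $\F$ witness an equivalence of $\TMTR$ and $\Raney$.

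For the final clause I would first identify $\F\R : \RMT \to \TMTR$ as the reflector, i.e.\ as a left adjoint to the inclusion $\TMTR \hookrightarrow \RMT$. The reflection arrow at $M$ is the component $\varphi_M : M \to \F\R M$ of \cref{adjoint equivalence}; since $\varphi_M$ is an $\RMT$-isomorphism with inverse $\zeta_M$, its universal property is immediate, as any $g : M \to N$ with $N \in \TMTR$ factors uniquely through $\varphi_M$ via $g \star \zeta_M$ (uniqueness follows from $\varphi_M \star \zeta_M = id_{\F\R M}$). Finally, $\F\R = \F \circ \R$ is a composite of the equivalence $\R : \RMT \to \Raney$ with the equivalence $\F : \Raney \to \TMTR$ just established, hence is itself an equivalence. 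Equivalently, the inclusion $\TMTR \hookrightarrow \RMT$ is full, faithful, and essentially surjective --- essential surjectivity being exactly the statement that $M \cong \F\R M \in \TMTR$ in $\RMT$ --- so it is an equivalence with quasi-inverse $\F\R$.

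The only delicate point is the first step. One is tempted to argue that $\F R \cong \F\R\F R$ by applying $\F$ to the isomorphism $\rho_R$ and then to invoke \cref{lem: Raney is t0}; but $\F\rho_R$ is only an $\RMT$-isomorphism, and \cref{prop: proximity isos} shows that such isomorphisms need not be order-isomorphisms, so \cref{lem: Raney is t0} (whose $\cong$ is an order-isomorphism) does not apply directly. This is why I would instead establish $T_0$-ness of $\F R$ concretely, through join-density of $\B C$ in its MacNeille completion together with \cref{prop: char of T0}.
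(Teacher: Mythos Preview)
Your argument is correct and follows the same route as the paper, which simply invokes \cref{lem: Raney is t0} to conclude that $\F$ lands in $\TMTR$ and then says the equivalence of \cref{adjunction of raney and MT} restricts. Your join-density argument for $\F R$ being $T_0$ is essentially what underlies \cref{lem: Raney is t0}, so you are unpacking a citation rather than taking a different path.

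One remark on your final paragraph: the circularity you worry about is not actually present. Since $\rho_R : R \to \R\F R$ is an isomorphism \emph{in $\Raney$}, i.e.\ a genuine coframe isomorphism, the induced map $\B\rho_R$ is a boolean isomorphism, and hence its join-preserving extension $\F\rho_R$ to the MacNeille completions is a complete boolean isomorphism, not merely an $\RMT$-isomorphism. Thus one can legitimately apply $\F$ to $\rho_R$ and invoke \cref{lem: Raney is t0} directly, as the paper does. Your alternative via \cref{prop: char of T0} is of course fine too.
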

							
							\begin{proof}
								By \cref{lem: Raney is t0}, 
								the equivalence $\R : \RMT \leftrightarrows \Raney : \F$ of \cref{adjunction of raney and MT} restricts to an equivalence between $\TMTR$ and $\Raney$. 
								Thus, the reflector $\F\R : \RMT \to \TMTR$ is an equivalence.
							\end{proof}

								We conclude by giving an example of an $\Raney$-morphism between Raney extensions that does not lift to an MT-morphism between their Funayama envelopes, thus justifying the need for the notion of Raney morphism between MT-algebras. 
								
								Note that if $L$ is both a frame and a coframe, then 
								the pair $(L,L)$ is a Raney extension. Moreover, if $L_1,L_2$ are such
								and $f : L_1 \to L_2$ is a complete lattice morphism, then $f$ is an $\Raney$-morphism between the Raney extensions $(L_1,L_1)$ and $(L_2,L_2)$. Thus, it is enough to show that not every such $f$ lifts to a complete boolean morphism $\F f : \F L_1 \to \F L_2$. 
								
								
								\begin{example} \label{Cantor example}
									Let $L_1$ be the Cantor set. Since $L_1$ is a closed subset of $[0,1]$, it is closed under arbitrary suprema and infima. Therefore, $L_1$ is a complete lattice in the order inherited from $[0,1]$. 
									Thus, since $L_1$ is a chain, it is both a frame and a coframe. 
									
									There are various representations of $L_1$. For our purposes, we think of $L_1$ as  
									\[
									L_1 = \{ 0.a_1 a_2 a_3 \ldots \mid a_i \in \{0, 2\} \}
									\]
									(see, e.g., \cite[p.~320]{GH09}).
									%
									In this representation, the right endpoints of a removed interval in the construction of the Cantor set 
									are the finite sequences in $\{0,2\}$ followed by an infinite tail of $0$s:
									\[
									\mathcal{R}(L_1) = \left\{ 0.a_1 a_2 \ldots a_n \overline{0} \;\middle|\; a_i \in \{0, 2\},\; n \in \mathbb{N},\; (a_1,\dots, a_n) \neq (0,\dots, 0)\right\},
									\]
									and the left endpoints are the same finite sequences followed by an infinite tail of $2$s:
									\[
									\mathcal{L}(L_1) = \left\{ 0.a_1 a_2 \ldots a_n \overline{2} \;\middle|\; a_i \in \{0, 2\},\; n \in \mathbb{N}.\; (a_1,\dots, a_n) \neq (2,\dots, 2) \right\}
									\]
									(see, e.g., \cite[p.~535]{GH09}).
									
									It is straightforward to check that $\mathcal{L}(L_1)$ meet-generates $L_1$ (indeed, each $x = 0.a_1 a_2 a_3 \ldots$ is the meet of the $x_n := 0.a_1 a_2 \ldots a_n \overline{2} \in \mathcal{L}(L_1)$). Also, since each left endpoint is covered by its corresponding right endpoint, it is clear that no element in $\mathcal{L}(L_1)$ is a meet of elements outside of $\mathcal{L}(L_1)$.
									
									We let $L_2 = L_1 \setminus \mathcal{L}(L_1)$. By the above observation, $L_2$ is closed under arbitrary meets. Since $L_1$ is a chain, for each $a,b \in L_1$, the relative pseudocomplement $a \to b := \bigvee \{ x \in L_1 \mid a \wedge x \le b \}$ is calculated by the following simple formula:
									\[
									a \to b =
									\begin{cases}
										1 & a \le b, \\
										b & a > b.
									\end{cases}
									\]
									Therefore, $a \to b \in L_2$ for each $a \in L_1$ and $b \in L_2$. Thus, $L_2$ is a sublocale of $L_1$ (see, e.g., \cite[p.~26]{PicadoPultr2012}). Consequently, $L_2$ is a complete lattice, and since $L_2$ is a chain, it is both a frame and a coframe. 
									
									Each sublocale $S$ of a frame $L$ induces the nucleus $j : L \to L$, given by $ja = \bigwedge (\up a \cap S)$, whose fixpoints are $S$ (see, e.g., \cite[p.~32]{PicadoPultr2012}). Observe that the nucleus $j$ on $L_1$ corresponding to the sublocale $L_2$ of $L_1$ is given by 
									\[
									ja =
									\begin{cases}
										a & a \in L_2, \\
										b & a \in \mathcal{L}(L_1),
									\end{cases}
									\]
									where $b$ is the unique cover of $a \in \mathcal{L}(L_1)$. We now show that the corresponding frame surjection $j : L_1 \to L_2$ is a complete lattice morphism. For this
									it is sufficient to show that  $j$ preserves arbitrary meets. Let $S \subseteq L_1$ and $a = \bigwedge S$. First suppose that $a \in \mathcal{L}(L_1)$. Then $b$ is its unique cover, and hence $a$ must be the least element of $S$. But then $j (\bigwedge S) = ja = \bigwedge \{ js \mid s \in S\}$. Next suppose that $a \notin \mathcal L(L_1)$. If $a \in S$, then we are done. Otherwise, for each $s \in S$ there is $t \in S$ with $a < t < s$, and hence $jt \le s$ since $jt = t$ or $jt$ covers $t$. Thus, $j(\bigwedge S) = ja = a = \bigwedge \{ js \mid s \in S \}$, and hence $j$ is a complete lattice morphism. Consequently, $j$ is an $\Raney$-morphism between the Raney extensions $(L_1,L_1)$ and $(L_2,L_2)$. It is left to show that $j$ does not lift to a complete boolean morphism between their Funayama envelopes. For this, we utilize a result from \cite{manuell15} (see also \cite{arrieta22}) that characterizes when such a lift is possible. 
									
									For any sublocale $S$ of a frame $L$ with the corresponding frame surjection $j : L \to S$, by \cite[Cor.~4.2]{arrieta22} (see also \cite[Lem.~3.39]{manuell15}) $j$ lifts to a complete boolean morphism of the Funayama envelopes iff $S$ is a join of locally closed sublocales. Here we recall (see, e.g., \cite[p.~33]{PicadoPultr2012})  that each $a \in L$ gives rise to the \emph{open sublocale} $\O(a)=\{a \to x \mid x \in L\}$, the \emph{closed sublocale} $\C(a)=\upset a$, and a sublocale of $L$ is {\em locally closed} provided it is of the form $\O(a) \cap \C(b)$ for some $a,b \in L$. 
									To see that $L_2$ is not a join of locally closed sublocales, we will show that the only locally closed sublocale contained in $L_2$ is 
									$\{1\}$.
									For $a,b \in L_1$, we have
									\[
									\mathsf{O}(a) = [0,a) \cup \{1\} \quad \text{and} \quad
									\mathsf{C}(b) = [b,1].
									\]
									Therefore, 
									\[
									\mathsf{O}(a) \cap \mathsf{C}(b) =
									\begin{cases}
										\{1\}, & a \leq b, \\
										[b, a) \cup \{1\}, & b < a.
									\end{cases}
									\]
									In the second case, since $\mathcal{L}(L_1)$ meet-generates $L_1$, there is $x \in \mathcal{L}(L_1)$ with $b \leq x < a$. 
									Hence,  $x \in \mathsf{O}(a) \cap \mathsf{C}(b)$ but $x \notin L_2$, so the intersection is not contained in $L_2$. 
									Consequently, $\F j$ is not a complete boolean morphism.
								\end{example}

								\section*{Declaration of funding}
								Anna Laura Suarez received financial support from the Centre for Mathematics of the University of Coimbra (funded by the Portuguese Government through FCT project UIDB/00324/2020).
								
								\section*{Acknowledgements}
								We would like to thank the referee for careful reading and positive feedback.
								

								\bibliographystyle{alpha}
								\bibliography{referenceraney}
								
							\end{document}